\newtheorem{theorem}{\textbf{Theorem}}[section]
\newtheorem{proposition}[theorem]{\textbf{Proposition}}
\newtheorem{lemma}[theorem]{\textbf{Lemma}}
\theoremstyle{definition}
\newtheorem{definition}[theorem]{\textbf{Definition}}
\newtheorem{remark}[theorem]{\textbf{Remark}}
\newtheorem{example}[theorem]{Example}
\numberwithin{equation}{section}
\begin{document}
\title{A Yang-Baxter Equation for Metaplectic Ice}
\author{Ben Brubaker}
\address{School of Mathematics, University of Minnesota, Minneapolis, MN 55455}
\email{brubaker@math.umn.edu}
\author{Valentin Buciumas}
\address{Einstein Institute of Mathematics, Edmond J. Safra Campus, Givat Ram, The Hebrew
University of Jerusalem, Jerusalem, 91904, Israel}
\email{valentin.buciumas@gmail.com}
\author{Daniel Bump}
\address{Department of Mathematics, Stanford University, Stanford, CA 94305-2125}
\email{bump@math.stanford.edu}
\subjclass[2010]{Primary 16T25; Secondary 22E50}
\maketitle

    
\newcommand{\gammaice}[6]{
\resizebox{1.75cm}{1.75cm}{\begin{tikzpicture}
\coordinate (a) at (-.75, 0);
\coordinate (b) at (0, .75);
\coordinate (c) at (.75, 0);
\coordinate (d) at (0, -.75);
\coordinate (aa) at (-.75,.5);
\coordinate (cc) at (.75,.5);
\draw (a)--(c);
\draw (b)--(d);
\draw[fill=white] (a) circle (.25);
\draw[fill=white] (b) circle (.25);
\draw[fill=white] (c) circle (.25);
\draw[fill=white] (d) circle (.25);
\node at (0,1) { };
\node at (a) {$#1$};
\node at (b) {$#2$};
\node at (c) {$#3$};
\node at (d) {$#4$};
\node at (aa) {$#5$};
\node at (cc) {$#6$};
\end{tikzpicture}}
}
\newcommand{\gamgam}[8]{
\resizebox{1.75cm}{1.95cm}{\begin{tikzpicture}
\coordinate (a) at (-.75, -.75);
\coordinate (b) at (-.75, .75);
\coordinate (c) at (.75, .75);
\coordinate (d) at (.75, -.75);
\coordinate (aa) at (-.75, -1.25);
\coordinate (bb) at (-.75, 1.25);
\coordinate (cc) at (.75, 1.25);
\coordinate (dd) at (.75, -1.25);
\draw (a)--(c);
\draw (b)--(d);
\draw[fill=white] (a) circle (.25);
\draw[fill=white] (b) circle (.25);
\draw[fill=white] (c) circle (.25);
\draw[fill=white] (d) circle (.25);
\node at (0,1) { };
\node at (a) {$#1$};
\node at (b) {$#2$};
\node at (c) {$#3$};
\node at (d) {$#4$};
\node at (aa) {$#5$};
\node at (bb) {$#6$};
\node at (cc) {$#7$};
\node at (dd) {$#8$};
\end{tikzpicture}}
}
\newcommand{\gamgamsmall}[8]{
\resizebox{1.25cm}{1.95cm}{\begin{tikzpicture}
\coordinate (a) at (-.75, -.75);
\coordinate (b) at (-.75, .75);
\coordinate (c) at (.75, .75);
\coordinate (d) at (.75, -.75);
\coordinate (aa) at (-.75, -1.25);
\coordinate (bb) at (-.75, 1.25);
\coordinate (cc) at (.75, 1.25);
\coordinate (dd) at (.75, -1.25);
\draw (a)--(c);
\draw (b)--(d);
\draw[fill=white] (a) circle (.25);
\draw[fill=white] (b) circle (.25);
\draw[fill=white] (c) circle (.25);
\draw[fill=white] (d) circle (.25);
\node at (0,1) { };
\node at (a) {$#1$};
\node at (b) {$#2$};
\node at (c) {$#3$};
\node at (d) {$#4$};
\node at (aa) {$#5$};
\node at (bb) {$#6$};
\node at (cc) {$#7$};
\node at (dd) {$#8$};
\end{tikzpicture}}
}

%
\newcommand{\lhs}[9]{\raisebox{-30pt}{\resizebox{5.4cm}{4.5cm}{\begin{tikzpicture}
\path[use as bounding box](-1,-2) rectangle(5,3);
\coordinate (a) at (0,-1);     
\coordinate (b) at (0,-1.5);   
\coordinate (c) at (0,1);      
\coordinate (d) at (0,1.5);    %
\coordinate (e) at (3,2);      
\coordinate (f) at (4,1);      
\coordinate (g) at (4,1.5);    %
\coordinate (h) at (4,-1);     
\coordinate (i) at (4,-1.5);   %
\coordinate (j) at (3,-2);     
\coordinate (k) at (2,1);      
\coordinate (l) at (2,1.5);    %
\coordinate (m) at (2,-1);     
\coordinate (n) at (2,-1.5);   %
\coordinate (o) at (3,0);      
\coordinate (p) at (1,0);      
\coordinate (q) at (3,1);      %
\coordinate (r) at (3,-1);     %
\draw (a) to [out=0, in=180] (k);
\draw (k)--(f);
\draw (c) to [out=0, in=180] (m);
\draw (m)--(h);
\draw (e)--(j);
\draw[fill=white] (a) circle (.25);
\draw[fill=white] (c) circle (.25);
\draw[fill=white] (e) circle (.25);
\draw[fill=white] (f) circle (.25);
\draw[fill=white] (h) circle (.25);
\draw[fill=white] (j) circle (.25);
\draw[fill=white] (k) circle (.25);
\draw[fill=white] (m) circle (.25);
\draw[fill=white] (o) circle (.25);
\path[fill=white] (p) circle (.25);
\path[fill=white] (q) circle (.35);
\path[fill=white] (r) circle (.35);
\node at (a) {$#1$};
\node at (c) {$#2$};
\node at (e) {$#3$};
\node at (f) {$#4$};
\node at (h) {$#5$};
\node at (j) {$#6$};
\node at (k) {$#7$};
\node at (m) {$#8$};
\node at (o) {$#9$};
\node at (p) {$\scriptstyle R_{z_i,z_j}$};
\node at (q) {$z_j$};
\node at (r) {$z_i$};
\end{tikzpicture}}}}
\newcommand{\rhs}[9]{\raisebox{-30pt}{\resizebox{5.4cm}{4.5cm}
{\begin{tikzpicture}
\path[use as bounding box](-1,-2) rectangle(5,3);
\coordinate (a) at (0,-1); 
\coordinate (b) at (0,1);  
\coordinate (c) at (1,2);  
\coordinate (d) at (4,1);  
\coordinate (e) at (4,-1); 
\coordinate (f) at (1,-2); 
\coordinate (g) at (2,-1); 
\coordinate (h) at (2,1);  
\coordinate (i) at (1,0);  
\coordinate (r) at (3,0);  
\coordinate (s) at (1,-1);  
\coordinate (t) at (1,1);  
\draw (a)--(g) to [out=0, in=180] (d);
\draw (b)--(h) to [out=0, in=180] (e);
\draw (c)--(f);
\draw[fill=white] (a) circle (.25);
\draw[fill=white] (b) circle (.25);
\draw[fill=white] (c) circle (.25);
\draw[fill=white] (d) circle (.25);
\draw[fill=white] (e) circle (.25);
\draw[fill=white] (f) circle (.25);
\draw[fill=white] (g) circle (.25);
\draw[fill=white] (h) circle (.25);
\draw[fill=white] (i) circle (.25);
\path[fill=white] (r) circle (.25);
\path[fill=white] (s) circle (.35);
\path[fill=white] (t) circle (.35);
\node at (a) {$#1$};
\node at (b) {$#2$};
\node at (c) {$#3$};
\node at (d) {$#4$};
\node at (e) {$#5$};
\node at (f) {$#6$};
\node at (g) {$#7$};
\node at (h) {$#8$};
\node at (i) {$#9$};
\node at (r) {$\scriptstyle R_{z_i,z_j}$};
\node at (s) {$z_j$};
\node at (t) {$z_i$};
\end{tikzpicture}}}}

\newcommand{\botcharge}[2]{\raisebox{-32pt}{$\begin{array}{cc}#1\\\scriptstyle #2\end{array}$}}
\newcommand{\topcharge}[2]{\raisebox{25pt}{$\begin{array}{cc}\scriptstyle #2\\#1\end{array}$}}
\newcommand{\GL}{\operatorname{GL}}
\newcommand{\SL}{\operatorname{SL}}
\newcommand{\C}{\mathbb{C}}


\begin{abstract}
  We will give new applications of quantum groups to the study
  of spherical Whittaker functions on the metaplectic $n$-fold
  cover of $\GL(r,F)$, where $F$ is a nonarchimedean local field.
  Earlier Brubaker, Bump, Friedberg, Chinta and Gunnells had shown
  that these Whittaker functions can be identified with
  the partition functions of statistical mechanical systems.
  They postulated that a Yang-Baxter equation underlies
  the properties of these Whittaker functions. We confirm
  this, and identify the corresponding Yang-Baxter equation
  with that of the quantum affine Lie superalgebra
  $U_{\sqrt{v}}(\widehat{\mathfrak{gl}}(1|n))$, modified by Drinfeld
  twisting to introduce Gauss sums. (The deformation parameter $v$
  is specialized to the inverse of the residue field cardinality.)

  For principal series representations of metaplectic groups, the Whittaker models are not unique. 
  The scattering matrix for the standard intertwining
  operators is vector valued. For a simple reflection, it was
  computed by Kazhdan and Patterson, who applied it to
  generalized theta series. We will show that the scattering
  matrix on the space of Whittaker functions for a simple
  reflection coincides with the twisted $R$-matrix of the quantum group
  $U_{\sqrt{v}}(\widehat{\mathfrak{gl}}(n))$. This is a piece
  of the twisted $R$-matrix for $U_{\sqrt{v}}(\widehat{\mathfrak{gl}}(1|n))$,
  mentioned above.
\end{abstract}

\section{Introduction}
The formula of Casselman and Shalika~\cite{CasselmanShalika}
expresses values of the spherical Whittaker function for a principal series
representation of a reductive algebraic group over a $p$-adic field in terms
of the characters of irreducible finite-dimensional representations of the
Langlands dual group. Their proof relies on knowing the effect
of the intertwining integrals on the normalized Whittaker functional. Since
the Whittaker functional is unique, the intertwining integral just multiplies
it by a constant, which they computed.

In contrast with this algebraic case, Whittaker models of principal series
representations of metaplectic groups are generally not unique. The
effect of the intertwining operators on the Whittaker models was computed by
Kazhdan and Patterson~\cite{KazhdanPatterson}. Specifically, they computed
the scattering matrix of the intertwining operator corresponding to a simple
reflection on the finite-dimensional vector space of Whittaker functionals
for the $n$-fold metaplectic cover of $\GL(r,F)$, where $F$ is a $p$-adic
field. Some terms in this matrix are simple rational functions of the
Langlands parameters, while others involve $n$-th order Gauss sums. Though
complicated in appearance, this scattering matrix was a key ingredient in their
study of generalized theta series, and also in the later development of a
metaplectic Casselman-Shalika formula by Chinta and Offen~\cite{ChintaOffen}
and McNamara~\cite{McNamaraCS}.

One of the two main results of this paper is that this scattering matrix computed by Kazhdan
and Patterson is the $R$-matrix of a quantum group, quantum affine
$\mathfrak{gl}(n)$, modified by Drinfeld twisting to introduce Gauss sums. 
This appears to be a new connection between the representation theory
of $p$-adic groups and quantum groups, which should allow one to use
techniques from the theory of quantum groups to study metaplectic Whittaker functions.

Although we can now prove this directly, we were led to this result by
studying lattice models whose partition functions give values of Whittaker
functions on a metaplectic cover of $\GL(r,F)$. In \cite{mice}, it was predicted
that a solvable such model should exist; i.e., one for which a solution to the Yang-Baxter equation
exists. Such a solvable model has important applications in number theory: it
gives easy proofs (in the style of Kuperberg's proof of the alternating sign
matrix conjecture) of several facts about Weyl group multiple Dirichlet series
\cite{wmd5book}. The other main result of this paper is the discovery of a
{\it solvable} lattice model whose partition function is a metaplectic Whittaker function. 
Moreover, we relate this solution to an $R$-matrix for the quantum affine superalgebra
$\mathfrak{gl}(1|n)$. The relation between the two main results follows from
the inclusion of (quantum affine) $\mathfrak{gl}(n)$ into
$\mathfrak{gl}(1|n)$.

We now explain these results in more detail. Let $\widetilde{G}$ denote an
$n$-fold metaplectic cover of $G := \GL (r, F)$ where the non-archimedean
local field $F$ contains the $2n$-th roots of unity. Given a partition $\lambda$
of length $\leqslant r$, we will exhibit a system $\mathfrak{S}_\lambda$ 
whose partition function equals the value of one
particular spherical Whittaker function at 
$\mathbf{s}\bigl(\operatorname{diag}(p^{\lambda_1}, \ldots, p^{\lambda_r})\bigr)$,
where $\mathbf{s}:\GL(r,F)\rightarrow\widetilde{G}$ is a standard section.

The systems proposed in~\cite{mice} were generalizations of the six-vertex
model. The six-vertex model with field-free boundary conditions was solved by
Lieb~\cite{Lieb}, Sutherland~\cite{Sutherland} and Baxter~\cite{Baxter} and
were motivating examples that led to the discovery of quantum groups
(cf.~{\cite{KulishReshetikhinSklyanin, JimboQuantumGroups, Drinfeld}}). In
Baxter's work, the solvability of the models is dictated by the Yang-Baxter
equation where the relevant quantum group is $U_q(\widehat{\mathfrak{sl}}_2)$. In
the special case $n=1$ (so when we are working with non-metaplectic $\operatorname{GL}(r,F)$), the systems proposed in~\cite{mice} 
are six-vertex models that coincide with those discussed in Brubaker, Bump and Friedberg~\cite{hkice,wmd5book} and
there is a Yang-Baxter equation available. However even in this case these
models differ from those considered by Lieb, Sutherland and Baxter since
they are not field-free. Based on
the results of this paper, we now understand that the relevant quantum group for the lattice models in
\cite{hkice,wmd5book} is
$U_q\bigl(\widehat{\mathfrak{gl}}(1|1)\bigr)$, as we will make clear in
subsequent sections.

It was explained in~{\cite{mice}} that a Yang-Baxter equation for metaplectic
ice would give new proofs of two important results in the theory of
metaplectic Whittaker functions. The first is a set of local functional
equations corresponding to the permutation of the Langlands-Satake
parameters. The second is an equivalence of two explicit formulas for the
Whittaker function, leading to analytic continuation and functional equations
for associated Weyl group multiple Dirichlet series. The proof of this latter
statement occupies the majority of~{\cite{wmd5book}}. 

However, no Yang-Baxter equation for the metaplectic ice in~{\cite{mice}}
could be found. In this paper we will make a small but crucial modification
of the Boltzmann weights for the model in~{\cite{mice}}. This change does not
affect the partition function, but it makes possible a Yang-Baxter
equation. This is Theorem~\ref{gammaybe} in Section~\ref{ybesection}. The
solutions to the Yang-Baxter equation may be encoded in a matrix commonly
referred to as an \textit{$R$-matrix.}

We further prove that the resulting $R$-matrix has two important properties:
\begin{enumerate}
\item It is a Drinfeld twist of the
$R$-matrix obtained from the defining representation of quantum affine $\widehat{\mathfrak{gl}} (1| n)$, a Lie
superalgebra.
\medskip
\item It contains the $R$-matrix of a Drinfeld twist of $\widehat{\mathfrak{gl}}(n)$
  which, as we have already explained, we will identify with the
  scattering matrix of intertwining operators on Whittaker models for metaplectic principal series.
\end{enumerate}

Consider the quantized enveloping algebra of the untwisted affine Lie algebra
$\widehat{\mathfrak{g}\mathfrak{l}} (n)$, i.e. the central extension of the loop algebra of $\mathfrak{gl}(n)$. We denote the quantized enveloping
algebra as $U_{\sqrt{v}} (\widehat{\mathfrak{g}\mathfrak{l}} (n))$ instead of the
usual $U_q$ because in our application the deformation parameter $v$ will be
$q^{-1}$, where $q$ is the cardinality of the residue field of $F$.  If $V$
and $W$ are vector spaces, let $\tau=\tau_{V,W}$ denote the flip operator
$V\otimes W\to W\otimes V$.  The Hopf algebra $U_{\sqrt{v}}
(\widehat{\mathfrak{g}\mathfrak{l}} (n))$ is almost quasitriangular; given any two modules $V$ and $W$, there is an {\textit{$R$-matrix}}
$R_{V, W} \in \operatorname{End} (V \otimes W)$ such that $\tau R_{V, W} : V
\otimes W \longrightarrow W \otimes V$ is a module homomorphism (though it will not always be an isomorphism). 
The
$R$-matrices for $U_{\sqrt{v}} ( \widehat{\mathfrak{g}\mathfrak{l}} (n))$ acting on
a tensor product of two evaluation modules were found by Jimbo~{\cite{JimboToda}} (see also Frenkel
and Reshetikhin~{\cite{FrenkelReshetikhin}}, Remark~4.1.); they satisfy a parametrized Yang-Baxter equation. 

The quantum group $U_{\sqrt{v}} ( \widehat{\mathfrak{g}\mathfrak{l}} (n))$ 
has an $n$-dimensional evaluation module $V_+ (z)$ for every complex parameter value
$z$. We will label a basis of the module $v_{+ a} (z)$
where $a$ runs through the integers modulo $n$. The parameter $+ a$ will be
called a {\textit{positive decorated spin}} (to be supplemented later by another one, 
denoted $- 0$).
We may think of the decoration $a$ (mod $n$) as roughly
corresponding to the sheets of the metaplectic cover $\widetilde{G}
\longrightarrow \GL (r)$ of degree~$n$. 

The resulting $R$-matrix in $\operatorname{End} (V_+(z_1) \otimes V_+(z_2))$ is
the matrix $R_{z_1, z_2} := R_{V_+(z_1), V_+(z_2)}$ whose entries $R_{\alpha, \beta}^{\gamma, \delta} (z_1, z_2)$ are
indexed by positive decorated spins $\alpha, \beta, \gamma$ and $\delta$ such that 
\[R_{z_1, z_2}\bigl(v_{\alpha}(z_1) \otimes v_{\beta}(z_2)\bigr) 
= \sum_{\gamma, \delta} R_{\alpha, \beta}^{\gamma, \delta} (z_1, z_2) 
v_{\gamma}(z_1) \otimes v_{\beta}(z_2).\]
These values are given by the following table:
\[ {\tabulinesep=1.2mm
    \begin{tabu}{|c|c|c|c|}
     \hline
     \alpha, \beta, \gamma, \delta & \begin{array}{c}
       + a, + a, + a, + a\\
       (0 \leqslant a \leqslant n)
     \end{array} & \begin{array}{c}
       + b, + a, + b, + a\\
       (0 \leqslant a, b \leqslant n, a \neq b)
     \end{array} & \begin{array}{c}
       + b, + a, + a, + b\\
       (0 \leqslant a, b \leqslant n, a \neq b)
     \end{array}\\
      \hline
      R_{\alpha, \beta}^{\gamma, \delta} (z_1, z_2) & \frac{- v + (z_1 /
        z_2)^{n}}{1 - v (z_1 / z_2)^{n}} & g (a - b)
      \frac{1 - (z_1 / z_2)^{ n}}{1 - v (z_1 / z_2)^{n}} & 
      \left\{\begin{array}{ll}
      (1 - v) \frac{(z_1 / z_2)^{n}}{1 - v (z_1 / z_2)^{n}} &
      a>b,\\\\
      (1 - v) \frac{1}{1 - v (z_1 / z_2)^{n}} &
      a<b.
      \end{array}\right.
      \\
      \hline
   \end{tabu}} \]
Here $g (a - b)$ is an $n$-th order Gauss sum. These are not present in the
out-of-the-box $U_{\sqrt{v}} ( \widehat{\mathfrak{g}\mathfrak{l}} (n))$ $R$-matrix, but
may be introduced by Drinfeld twisting that will be discussed in
Section~\ref{supersection} (see also Section~4 of~\cite{BBBF}). This procedure
does not affect the validity of the Yang-Baxter equations, but is needed for
comparison with the $R$-matrix for the partition functions of metaplectic ice
giving rise to Whittaker functions.

To obtain the full $R$-matrix used in the Yang-Baxter equation for metaplectic
ice, we must enlarge the set of positive decorated spins $+ a$ to include one more,
labelled $-0$. Thus there are $n+1$ decorated spins altogether, the positive ones
and one more. The $n$-dimensional vector space $V_+ (z)$ is
enlarged to an $n + 1$ ``super'' vector space $V_{\pm} (z)$. The
positive decorated spins are a basis for the odd part $V_+(z)$,
and the even part $V_-(z)$ is one-dimensional, spanned by $-0$.
In Section \ref{ybesection}, we present an $R$-matrix that gives a solution of
the Yang-Baxter equation for the metaplectic ice model.  In
Section~\ref{supersection}, we show that the solution of the Yang-Baxter
equation is equivalent to the $R$-matrix corresponding to the defining
representation of the quantum affine Lie superalgebra $U_{\sqrt{v}}(
\widehat{\mathfrak{g}\mathfrak{l}} (1|n))$ modified by a Drinfeld twist.

Finally, we explain the connection between the $R$-matrix of Theorem~\ref{gammaybe}
and the structure constants alluded to in item (2) above. The local functional
equations for metaplectic Whittaker functions mentioned earlier may be understood 
as arising from intertwining operators. Let $\widehat{T}$ be the diagonal torus in $\GL
(r, \mathbb{C})$, the Langlands dual group of $G$. Each diagonal matrix
\[ \mathbf{z} = \left( \begin{array}{ccc}
     z_1 &  & \\
     & \ddots & \\
     &  & z_{r}
   \end{array} \right) \in \widehat{T} ( \mathbb{C}) \]
indexes a principal series representation $\pi_{\mathbf{z}}$ of $\widetilde{G}$.
Let $\mathcal{W}^{\mathbf{z}}$ be the finite-dimensional vector space of
spherical Whittaker functions for $\pi_{\mathbf{z}}$. If $n = 1$, $\mathcal{W}^{\mathbf{z}}$ is
one-dimensional, but not in general since if $n > 1$ the representation
$\pi_{\mathbf{z}}$ does not have unique Whittaker models. If $s_i$ is a simple
reflection in the Weyl group $W$, then let $\mathcal{A}_{s_i}$ denote the
standard \textit{intertwining integral} $\mathcal{A}_{s_i} :
\pi_{\mathbf{z}} \longrightarrow \pi_{s_i \mathbf{z}}$ (see \eqref{intertwinerunnorm} for
the precise definition). This induces a map
$\mathcal{W}^{\mathbf{z}} \to \mathcal{W}^{s_i \mathbf{z}}$. If $n > 1$ then
$\mathcal{A}_{s_i}$ has an interesting scattering matrix on the Whittaker
model that was computed by Kazhdan and Patterson
(Lemma~I.3.3 of ~{\cite{KazhdanPatterson}}). This calculation
underlies their work on generalized theta series, and was used by
Chinta and Offen~{\cite{ChintaOffen}} and generalized by
McNamara~{\cite{McNamaraCS}} to study the analog of the
Casselman-Shalika formula for the spherical Whittaker functions.

Let $\pi_{\mathbf{z}, \psi}$ be the module of Whittaker coinvariants of the
representation $\pi_{\mathbf{z}}$. By definition this is the quotient of the
underlying space of $\pi_{\mathbf{z}}$ characterized by the fact that a
linear functional is a Whittaker functional if and only if it factors through
$\pi_{\mathbf{z}, \psi}$. Thus $\pi_{\mathbf{z}, \psi}$ is the dual space
of the space of Whittaker functionals on $\pi_{\mathbf{z}}$. Its dimension
is $n^r$.
In Section~\ref{connex}, we will prove that the scattering matrix of the intertwining
integrals on the Whittaker coinvariants is essentially 
$\tau R_{z_i,z_{i+1}}$, where $R_{z_i,z_{i+1}}$ is the $R$-matrix
for a Drinfeld twist of $U_{\sqrt{v}}(\widehat{\mathfrak{gl}}(n))$.


\begin{theorem}
\label{commdiagwithintertwiner}
There is an isomorphism $\theta_{\mathbf{z}}$ of the space
$\pi_{\mathbf{z},\psi}$ of Whittaker coinvariants
to the vector space $V_+ (z_1) \otimes \cdots \otimes V_+ (z_r)$ that takes the vectors
$v_{+ a_1} (z_1) \otimes \cdots \otimes v_{+a_r} (z_r)$ into the basis of $\pi_{\mathbf{z},\psi}$
dual to the basis of $\mathcal{W}^{\mathbf{z}}$ given
in~\cite{KazhdanPatterson, ChintaOffen, McNamaraCS} (see Section~\ref{connex}). Then the following diagram commutes:
\[\begin{CD}\pi_{\mathbf{z},\psi}@>\theta_{\mathbf{z}}>>V_+(z_1) \otimes \cdots \otimes V_+(z_i)\otimes V_+(z_{i+1})\otimes\cdots\otimes V_+(z_r)\\
@VV\bar{\mathcal{A}}_{s_i} V @VV (\tau R_{z_i,z_{i+1}})_{i,i+1} V\\
\pi_{s_i\mathbf{z},\psi}@>\theta_{\mathbf{s_iz}}>>V_+(z_1) \otimes \cdots \otimes V_+(z_{i+1})\otimes V_+(z_i)\otimes\cdots\otimes V_+(z_r)
\end{CD} \]
where $\bar{\mathcal{A}}_{s_i}$ denotes the map induced by the normalized intertwining operator
defined in \eqref{normalizedint}.
\end{theorem}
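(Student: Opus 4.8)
The plan is to deduce the theorem from a direct comparison of two matrices: the matrix of the map $\bar{\mathcal{A}}_{s_i}^{\ast}\colon\mathcal{W}^{\mathbf{z}}\to\mathcal{W}^{s_i\mathbf{z}}$ in the distinguished basis of $\mathcal{W}^{\mathbf{z}}$ recalled in Section~\ref{intopsection}, and the matrix of $I_{V_+(z_1)}\otimes\cdots\otimes\tau R_{z_i,z_{i+1}}\otimes\cdots\otimes I_{V_+(z_r)}$ in the basis $v_{+a_1}(z_1)\otimes\cdots\otimes v_{+a_r}(z_r)$. Because $\theta_{\mathbf{z}}$ and $\theta_{s_i\mathbf{z}}$ are, by construction, the linear maps sending one basis to the other, and the two spaces have the same dimension, $\theta_{\mathbf{z}}$ is automatically an isomorphism, and the entire content of the statement is the equality of these two matrices.

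First I would reduce to the rank-two case. The reflection $s_i$ interchanges $z_i$ and $z_{i+1}$ and fixes the remaining parameters, and the standard intertwining integral of~\eqref{intertwinerunnorm} attached to $s_i$ is an integral over the rank-one unipotent subgroup of the simple root $\alpha_i$; in the Kazhdan--Patterson basis its effect is the identity on every tensor slot other than the $i$-th and $(i+1)$-st, while on those two slots it is an operator depending only on $z_i/z_{i+1}$ and on the decorations $a_i,a_{i+1}$. (This localization is part of the content of Lemma~I.3.3 of~\cite{KazhdanPatterson}; see also~\cite{ChintaOffen,McNamaraCS}.) That is exactly the shape of $I_{V_+(z_1)}\otimes\cdots\otimes\tau R_{z_i,z_{i+1}}\otimes\cdots\otimes I_{V_+(z_r)}$, so it suffices to take $r=2$, $i=1$ and to match a two-parameter family of operators on $V_+(z_1)\otimes V_+(z_2)$.

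Next I would write the Kazhdan--Patterson scattering matrix explicitly, in coordinates $z_1/z_2$ and $b=a_1$, $a=a_2$ (so that the quantum-group slot $j$ corresponds to the parameter $z_j$), pass from the unnormalized $\mathcal{A}_{s_1}$ to the normalized $\bar{\mathcal{A}}_{s_1}$ of~\eqref{normalizedint}, and rewrite the answer in the variable $(z_1/z_2)^{-n}$, the $n$-th power reflecting the degree of the cover. Their formula has three families of entries: a rational coefficient attached to the term that preserves the pair of decorations when $a_1\equiv a_2\pmod{n}$; an $n$-th order Gauss sum times a rational function attached to the decoration-preserving term when $a_1\not\equiv a_2$; and a rational function, with no Gauss sum, attached to the term that interchanges the two decorations, whose value depends on the order relation between a fixed choice of residue representatives of $a_1$ and $a_2$ modulo $n$. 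After normalization these become, respectively,
\[\frac{-v+(z_1/z_2)^{-n}}{1-v(z_1/z_2)^{-n}},\qquad g(a-b)\,\frac{1-(z_1/z_2)^{-n}}{1-v(z_1/z_2)^{-n}},\qquad (1-v)\,\frac{(z_1/z_2)^{-n}}{1-v(z_1/z_2)^{-n}}\ \text{ or }\ (1-v)\,\frac{1}{1-v(z_1/z_2)^{-n}}\]
(the last according as $a>b$ or $a<b$), which are precisely the entries $R^{\gamma,\delta}_{\alpha,\beta}(z_1,z_2)$ of the table. Since the flip $\tau$ turns $R_{z_1,z_2}\colon V_+(z_1)\otimes V_+(z_2)\to V_+(z_1)\otimes V_+(z_2)$ into a map $V_+(z_1)\otimes V_+(z_2)\to V_+(z_2)\otimes V_+(z_1)$, matching the target of the right-hand vertical arrow, this equality of matrices is exactly the commutativity of the diagram.

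The step I expect to be the main obstacle is the reconciliation of conventions. Kazhdan and Patterson work with a particular metaplectic $2$-cocycle, a particular normalization of the intertwining integral, and a particular normalization of the $n$-th order Gauss sums (depending on a choice of additive character of $F$), whereas the table above is written in the conventions natural to $U_{\sqrt v}(\widehat{\mathfrak{gl}}(n))$. The real work lies in building the dictionary between them: pinning down the normalizing scalar in~\eqref{normalizedint} that renders every denominator equal to $1-v(z_i/z_{i+1})^{-n}$, identifying the Kazhdan--Patterson Gauss sums with $g(a-b)$ together with the dependence on representatives, and fixing the ordering conventions responsible for the split into the cases $a>b$ and $a<b$. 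Once the dictionary is in place, the term-by-term identification above is mechanical, and the commutativity of the diagram follows.
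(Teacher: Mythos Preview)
Your approach is correct. The core of both arguments is the same calculation---matching the Kazhdan--Patterson structure constants $\tau^1_{\mu,\mu}$ and $\tau^2_{s_i(\mu)+\alpha_i,\mu}$ of Proposition~\ref{KPlemma} against the entries of the normalized, Drinfeld-twisted $R$-matrix---and this is precisely Proposition~\ref{taumatch} in the paper. Where you differ is in the packaging: you argue directly that the commutativity of the diagram is equivalent to this matrix identity, and you are done. The paper instead defines $\theta_{\mathbf{z}}$ via the ice-model correspondence of Proposition~\ref{refinedmatch} (Whittaker function $\leftrightarrow$ partition function with prescribed left-edge charges) and then deduces commutativity from Proposition~\ref{risice}, which recasts the scattering identity~\eqref{fundscat} as an identity of partition functions that in turn follows from the Yang--Baxter equation of Theorem~\ref{gammaybe}. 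That lattice-model detour is not logically required for Theorem~\ref{commdiagwithintertwiner} itself; it is there to tie the result back to the solvable models that are the paper's main object, and to exhibit the Yang--Baxter equation as an alternative source for~\eqref{fundscat}. Your direct route is shorter and makes transparent that the theorem is a restatement of Kazhdan--Patterson's Lemma~I.3.3 in quantum-group language, with the ``reconciliation of conventions'' you flag being exactly the content of Proposition~\ref{taumatch}.
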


The notation $(\tau R_{z_i,z_{i+1}})_{i,i+1}$ means that the operator
$\tau R_{z_i,z_{i+1}}:V_+(z_i)\otimes V_+(z_{i+1})\to V_+(z_{i+1})\otimes V_+(z_i)$
is applied to the $i,i+1$ tensor components, while we take the identity map on the
remaining components.

This offers a new and seemingly fundamental connection between the
representation theory of quantum groups and $p$-adic metaplectic groups.
It also suggests several immediate questions.

First, one may ask for generalizations 
to other Cartan types. For symplectic groups, Yang-Baxter equations based on those
found here are given in Gray~\cite{GrayThesis}. A categorical framework for some of
these operations would be desirable. Even for central extensions of $\GL(r,F)$ there 
are open questions. We required the $2n$-th roots of unity to be in the ground field $F$,
in order to twist the Matsumoto cocycle defining the metaplectic
central extension of $\GL(r,F)$ by a cocycle of the form
$(\det(g_1),\det(g_2))_{2n}$ as in~(\ref{toruscocycle}). We may ask whether
other choices of cocycle admit a similar story; in particular, some choices result in a strictly
smaller dimensional space of Whittaker models, so wouldn't biject with basis elements in the
tensor product of vector spaces appearing in Theorem~\ref{commdiagwithintertwiner}.

One may also ask for connections with other literature such as
Weissman~\cite{Weissman}. It seems particularly important to understand the
relation between our work and the the quantum geometric Langlands program
initiated by Lurie and Gaitsgory in \cite{GaitsgoryWhittaker}, and more specifically the
relation to the work of Lysenko~\cite{Lysenko} and Gaitsgory and Lysenko
\cite{GaitsgoryLysenko}.

\begin{remark}\label{missingmodule}
  While we have given an interpretation of the $n+1$
  decorated spins which are the possible states of the horizontal edges as basis
  vectors for an evaluation module of
  $U_{\sqrt{v}} (\widehat{\mathfrak{g}\mathfrak{l}} (1|n))$, the edges of
  vertical type have no known similar interpretation. One may
  ask whether $U_{\sqrt{v}} (\widehat{\mathfrak{g}\mathfrak{l}} (1|n))$
  has a two-dimensional module $M$ such that
  the Boltzmann weights in Figure~\ref{gammaice} are interpreted
  as the $R$-matrix for the pair $V_z$, $M$. We know no reason
  for such an $M$ to exist, except that if it does not, then
  Theorem~\ref{gammaybe} is an example of a parametrized
  Yang-Baxter equation that is not predicted by quasitriangularity.
\end{remark}

We conclude by reviewing some recent papers which are sequels to
this one.

The paper by Brubaker, Buciumas, Bump and Friedberg~\cite{BBBF} was written
after the first draft of this one was already posted to the arxiv, and
depends on this one. In it we give a very general method of constructing
representations of the affine Hecke algebra and show that examples of such
representations can come either from the theory of Whittaker functionals on
metaplectic $p$-adic groups or from certain Schur-Weyl dualities for quantum
affine algebras. Theorem~1 in the present paper is used to prove the two
representations mentioned are in fact the same.  The paper also contains a
more formal discussion of the Drinfeld twisting, an important supplement to
the brief treatment we give below in Section~\ref{twisting}.

The paper by Brubaker, Buciumas, Bump and Gray~\cite{BBBG} was also
written after this one. It uses the Yang-Baxter equations
from this paper, and supplementary ones from Gray~\cite{GrayThesis},
to reprove the main result of~\cite{wmd5book}, which may be
expressed as the equality of the partition functions of two different ice models.
One of the two ice models is described below in Section~\ref{partitionfunction}.
The other one is similar but has different weights. The equality of the two
partition functions is reminiscent of dualities for physical systems,
similar for example to the Kramers-Wannier duality that relates the partition
functions of the low-temperature and high temperature Ising models.

In the paper Brubaker, Buciumas, Bump and Gustafsson~\cite{BBBGu}
it is shown (extending the earlier paper \cite{BrubakerSchultz} in
the $n=1$ case) that the row transfer matrices for metaplectic ice
can be interpreted as operators on the Fermionic Fock space $\mathfrak{F}$
of Kashiwara, Miwa and Stern~\cite{KMS}, after Drinfeld twisting.
This is a module for (twisted) $U_{\sqrt{v}}(\widehat{\mathfrak{sl}}_n)$.
To achieve this one modifies the boundary conditions so that the
grid has infinitely many columns. Then a sequence of spins in
a row of vertically oriented edges may be interpreted as a basis
vector in $\mathfrak{F}$, and the main theorem is that the
row transfer matrices have expressions resembling vertex
operators. In particular they are $U_{\sqrt{v}}(\widehat{\mathfrak{sl}}_n)$-module
homomorphisms. This partially addresses the lack of an
interpretation of the vertical edges as
$U_{\sqrt{v}}(\widehat{\mathfrak{sl}}_n)$-modules
noted in Remark~\ref{missingmodule}.

\medbreak
\textbf{Acknowledgements}: This work was supported by NSF grants DMS-1406238
(Brubaker) and DMS-1001079, DMS-1601026 (Bump and Buciumas). We thank
Gautam Chinta, Solomon Friedberg
and Paul Gunnells for their support and encouragement, and David Kazhdan,
Daniel Orr and the referee for helpful comments.

\section{The partition function\label{partitionfunction}}

In statistical mechanics, the partition function of a model is
a \textit{generating function}. This means that through its
dependence on global parameters of the system (such as temperature)
it carries information about properties of the system such as entropy
and free energy. Here we are concerned with two-dimensional lattice
models that represent metaplectic Whittaker functions, and the global
parameters on which it depends are the Langlands parameters.


Consider a finite two-dimensional rectangular grid of fixed size, composed of interior edges
connecting to vertices of the grid and boundary edges adjacent to a single vertex
in the grid. Every edge will be assigned a
\textit{spin}, which has value $+$ or $-$. The spins along the boundary
edges will be fixed as part of the data specifying the system; the spins on
the interior edges will be allowed to vary. Thus with the spins on the
boundary fixed, a \textit{state} of the system will be an assignment of
spins to the interior edges.

We associate a system to any integer partition $\lambda = (\lambda_1, \ldots, \lambda_r)$ as follows.
The size of the grid will have $r$ rows and $N$ columns, where $N$ may be any
integer greater than or equal to $\lambda_1+r$. The boundary edge spins are
set to be $+$ at all left and bottom boundary edges, and $-$ on all right
edges. The boundary edges along the top of the grid depend on the strict
partition $\lambda + \rho$ with $\rho = (r - 1,\ldots, 3, 2, 1, 0)$.  The
spins along the top edge will be $-$ in the columns numbered
$(\lambda+\rho)_i$ for all $1 \leqslant i \leqslant r$ and $+$ on all
remaining columns.  See Figure~\ref{icestate} for an example of a state in the
system for $\lambda = (3,2,0)$ and $N = 5$.\footnote{Strictly speaking, our
systems correspond to an integer partition $\lambda$ \textit{and} the choice
of sufficiently large integer $N$ specifying the number of columns.
However, the partition function $Z(\mathfrak{S}_\lambda)$
is unchanged if we increase $N$, and we suppress~$N$ from the notation.}

Define the \textit{charge} at each horizontal edge in the configuration to
be the number of $+$ spins at or to the right of the edge, along the same row.
(This notion was introduced in~\cite{mice}.) We also will speak of the charge
at a vertex, defined to be the charge on the edge to the right of the
vertex. The charges are labeled in Figure~\ref{icestate} as decorations above
each vertex.

\begin{definition}
  The state will be called \textit{admissible} if the four spins on adjacent
  edges of any vertex are in one of the six configurations in
  Figure~\ref{gammaice} (top). It will be called
  {\textit{$n$-admissible}} if it is admissible and if furthermore every
  horizontal edge with a $-$ spin has charge $\equiv 0$ modulo~$n$.
\end{definition}

An example of an admissible state is shown in Figure~\ref{icestate}. (The
appearance of labels $z_i$ on the vertices in the figure will be explained
momentarily.) The illustrated state is $n$-admissible only if $n = 1$ or $2$,
since it has a horizontal $-$ edge with charge~$2$.

\begin{figure}
\scalebox{0.8}{
\begin{tikzpicture}
  \coordinate (ab) at (1,0);
  \coordinate (ad) at (3,0);
  \coordinate (af) at (5,0);
  \coordinate (ah) at (7,0);
  \coordinate (aj) at (9,0);
  \coordinate (al) at (11,0);
  \coordinate (ba) at (0,1);
  \coordinate (bc) at (2,1);
  \coordinate (be) at (4,1);
  \coordinate (bg) at (6,1);
  \coordinate (bi) at (8,1);
  \coordinate (bk) at (10,1);
  \coordinate (bm) at (12,1);
  \coordinate (cb) at (1,2);
  \coordinate (cd) at (3,2);
  \coordinate (cf) at (5,2);
  \coordinate (ch) at (7,2);
  \coordinate (cj) at (9,2);
  \coordinate (cl) at (11,2);
  \coordinate (da) at (0,3);
  \coordinate (dc) at (2,3);
  \coordinate (de) at (4,3);
  \coordinate (dg) at (6,3);
  \coordinate (di) at (8,3);
  \coordinate (dk) at (10,3);
  \coordinate (dm) at (12,3);
  \coordinate (eb) at (1,4);
  \coordinate (ed) at (3,4);
  \coordinate (ef) at (5,4);
  \coordinate (eh) at (7,4);
  \coordinate (ej) at (9,4);
  \coordinate (el) at (11,4);
  \coordinate (fa) at (0,5);
  \coordinate (fc) at (2,5);
  \coordinate (fe) at (4,5);
  \coordinate (fg) at (6,5);
  \coordinate (fi) at (8,5);
  \coordinate (fk) at (10,5);
  \coordinate (fm) at (12,5);
  \coordinate (gb) at (1,6);
  \coordinate (gd) at (3,6);
  \coordinate (gf) at (5,6);
  \coordinate (gh) at (7,6);
  \coordinate (gj) at (9,6);
  \coordinate (gl) at (11,6);
  \coordinate (bb) at (1,1);
  \coordinate (bd) at (3,1);
  \coordinate (bf) at (5,1);
  \coordinate (bh) at (7,1);
  \coordinate (bj) at (9,1);
  \coordinate (bl) at (11,1);
  \coordinate (db) at (1,3);
  \coordinate (dd) at (3,3);
  \coordinate (df) at (5,3);
  \coordinate (dh) at (7,3);
  \coordinate (dj) at (9,3);
  \coordinate (dl) at (11,3);
  \coordinate (fb) at (1,5);
  \coordinate (fd) at (3,5);
  \coordinate (ff) at (5,5);
  \coordinate (fh) at (7,5);
  \coordinate (fj) at (9,5);
  \coordinate (fl) at (11,5);
  \coordinate (bax) at (0,1.5);
  \coordinate (bcx) at (2,1.5);
  \coordinate (bex) at (4,1.5);
  \coordinate (bgx) at (6,1.5);
  \coordinate (bix) at (8,1.5);
  \coordinate (bkx) at (10,1.5);
  \coordinate (bmx) at (12,1.5);
  \coordinate (dax) at (0,3.5);
  \coordinate (dcx) at (2,3.5);
  \coordinate (dex) at (4,3.5);
  \coordinate (dgx) at (6,3.5);
  \coordinate (dix) at (8,3.5);
  \coordinate (dkx) at (10,3.5);
  \coordinate (dmx) at (12,3.5);
  \coordinate (fax) at (0,5.5);
  \coordinate (fcx) at (2,5.5);
  \coordinate (fex) at (4,5.5);
  \coordinate (fgx) at (6,5.5);
  \coordinate (fix) at (8,5.5);
  \coordinate (fkx) at (10,5.5);
  \coordinate (fmx) at (12,5.5);
  \draw (ab)--(gb);
  \draw (ad)--(gd);
  \draw (af)--(gf);
  \draw (ah)--(gh);
  \draw (aj)--(gj);
  \draw (al)--(gl);
  \draw (ba)--(bm);
  \draw (da)--(dm);
  \draw (fa)--(fm);
  \draw[fill=white] (ab) circle (.25);
  \draw[fill=white] (ad) circle (.25);
  \draw[fill=white] (af) circle (.25);
  \draw[fill=white] (ah) circle (.25);
  \draw[fill=white] (aj) circle (.25);
  \draw[fill=white] (al) circle (.25);
  \draw[fill=white] (ba) circle (.25);
  \draw[fill=white] (bc) circle (.25);
  \draw[fill=white] (be) circle (.25);
  \draw[fill=white] (bg) circle (.25);
  \draw[fill=white] (bi) circle (.25);
  \draw[fill=white] (bk) circle (.25);
  \draw[fill=white] (bm) circle (.25);
  \draw[fill=white] (cb) circle (.25);
  \draw[fill=white] (cd) circle (.25);
  \draw[fill=white] (cf) circle (.25);
  \draw[fill=white] (ch) circle (.25);
  \draw[fill=white] (cj) circle (.25);
  \draw[fill=white] (cl) circle (.25);
  \draw[fill=white] (da) circle (.25);
  \draw[fill=white] (dc) circle (.25);
  \draw[fill=white] (de) circle (.25);
  \draw[fill=white] (dg) circle (.25);
  \draw[fill=white] (di) circle (.25);
  \draw[fill=white] (dk) circle (.25);
  \draw[fill=white] (dm) circle (.25);
  \draw[fill=white] (eb) circle (.25);
  \draw[fill=white] (ed) circle (.25);
  \draw[fill=white] (ef) circle (.25);
  \draw[fill=white] (eh) circle (.25);
  \draw[fill=white] (ej) circle (.25);
  \draw[fill=white] (el) circle (.25);
  \draw[fill=white] (fa) circle (.25);
  \draw[fill=white] (fc) circle (.25);
  \draw[fill=white] (fe) circle (.25);
  \draw[fill=white] (fg) circle (.25);
  \draw[fill=white] (fi) circle (.25);
  \draw[fill=white] (fk) circle (.25);
  \draw[fill=white] (fm) circle (.25);
  \draw[fill=white] (gb) circle (.25);
  \draw[fill=white] (gd) circle (.25);
  \draw[fill=white] (gf) circle (.25);
  \draw[fill=white] (gh) circle (.25);
  \draw[fill=white] (gj) circle (.25);
  \draw[fill=white] (gl) circle (.25);
  \path[fill=white] (bb) circle (.25);
  \path[fill=white] (bd) circle (.25);
  \path[fill=white] (bf) circle (.25);
  \path[fill=white] (bh) circle (.25);
  \path[fill=white] (bj) circle (.25);
  \path[fill=white] (bl) circle (.25);
  \path[fill=white] (db) circle (.25);
  \path[fill=white] (dd) circle (.25);
  \path[fill=white] (df) circle (.25);
  \path[fill=white] (dh) circle (.25);
  \path[fill=white] (dj) circle (.25);
  \path[fill=white] (dl) circle (.25);
  \path[fill=white] (fb) circle (.25);
  \path[fill=white] (fd) circle (.25);
  \path[fill=white] (ff) circle (.25);
  \path[fill=white] (fh) circle (.25);
  \path[fill=white] (fj) circle (.25);
  \path[fill=white] (fl) circle (.25);
  \node at (bb) {$z_1$};
  \node at (bd) {$z_1$};
  \node at (bf) {$z_1$};
  \node at (bh) {$z_1$};
  \node at (bj) {$z_1$};
  \node at (bl) {$z_1$};
  \node at (db) {$z_2$};
  \node at (dd) {$z_2$};
  \node at (df) {$z_2$};
  \node at (dh) {$z_2$};
  \node at (dj) {$z_2$};
  \node at (dl) {$z_2$};
  \node at (fb) {$z_3$};
  \node at (fd) {$z_3$};
  \node at (ff) {$z_3$};
  \node at (fh) {$z_3$};
  \node at (fj) {$z_3$};
  \node at (fl) {$z_3$};
  \node at (-2,5) {row:};
  \node at (-1.2,5) {3};
  \node at (-1.2,3) {2};
  \node at (-1.2,1) {1};
  \node at (gb) {$-$};
  \node at (gd) {$+$};
  \node at (gf) {$-$};
  \node at (gh) {$+$};
  \node at (gj) {$+$};
  \node at (gl) {$-$};
  \node at (fa) {$+$};
  \node at (fc) {$+$};
  \node at (fe) {$+$};
  \node at (fg) {$-$};
  \node at (fi) {$+$};
  \node at (fk) {$+$};
  \node at (fm) {$-$};
  \node at (eb) {$-$};
  \node at (ed) {$+$};
  \node at (ef) {$+$};
  \node at (eh) {$-$};
  \node at (ej) {$+$};
  \node at (el) {$+$};
  \node at (da) {$+$};
  \node at (dc) {$-$};
  \node at (de) {$-$};
  \node at (dg) {$-$};
  \node at (di) {$-$};
  \node at (dk) {$-$};
  \node at (dm) {$-$};
  \node at (cb) {$+$};
  \node at (cd) {$+$};
  \node at (cf) {$+$};
  \node at (ch) {$-$};
  \node at (cj) {$+$};
  \node at (cl) {$+$};
  \node at (ba) {$+$};
  \node at (bc) {$+$};
  \node at (be) {$+$};
  \node at (bg) {$+$};
  \node at (bi) {$-$};
  \node at (bk) {$-$};
  \node at (bm) {$-$};
  \node at (ab) {$+$};
  \node at (ad) {$+$};
  \node at (af) {$+$};
  \node at (ah) {$+$};
  \node at (aj) {$+$};
  \node at (al) {$+$};
  \node at (11,7) {$0$};
  \node at (9,7) {$1$};
  \node at (7,7) {$2$};
  \node at (5,7) {$3$};
  \node at (3,7) {$4$};
  \node at (1,7) {$5$};
  \node at (0,7.04) {column:};
  \node at (bax) {$\scriptstyle 4$};
  \node at (bcx) {$\scriptstyle 3$};
  \node at (bex) {$\scriptstyle 2$};
  \node at (bgx) {$\scriptstyle 1$};
  \node at (bix) {$\scriptstyle 0$};
  \node at (bkx) {$\scriptstyle 0$};
  \node at (bmx) {$\scriptstyle 0$};
  \node at (dax) {$\scriptstyle 1$};
  \node at (dcx) {$\scriptstyle 0$};
  \node at (dex) {$\scriptstyle 0$};
  \node at (dgx) {$\scriptstyle 0$};
  \node at (dix) {$\scriptstyle 0$};
  \node at (dkx) {$\scriptstyle 0$};
  \node at (dmx) {$\scriptstyle 0$};
  \node at (fax) {$\scriptstyle 5$};
  \node at (fcx) {$\scriptstyle 4$};
  \node at (fex) {$\scriptstyle 3$};
  \node at (fgx) {$\scriptstyle 2$};
  \node at (fix) {$\scriptstyle 2$};
  \node at (fkx) {$\scriptstyle 1$};
  \node at (fmx) {$\scriptstyle 0$};
\end{tikzpicture}}
\caption{A state of a six-vertex model system. The
  columns are labeled in descending order from $N-1$
  down to $0$. The rows are labeled in descending
  order from $r$ down to $1$. In this case $N=6$, the
  partition $\lambda$ is $(3,2,0)$,
  so $\lambda+\rho=(5,3,0)$; therefore the $-$ in
  the top row are placed in columns $5$, $3$, $0$.
  The charges are shown for each horizontal edge.
  If $n=2$ this state is $n$-admissible since the
  charges of the $-$ edges are multiples of~$2$.}
\label{icestate}
\end{figure}

The
\textit{Boltzmann weight} of a state is obtained as a product of weights
attached to each vertex in the model. The weight attached to any vertex makes
use of a pair of functions $h$ and $g$ defined on the integers satisfying certain properties which we will now explain.

Let $n$ be a fixed positive integer and $v$ a fixed parameter. Let $g(a)$ be a function 
of the integer $a$ which is periodic modulo $n$, and such
that $g(0)=-v$, while $g(a)\,g(n-a)=v$ if $n$ does not divide $a$. Let 
\begin{equation}
  \label{hsumdef}
  h(a)=\left\{\begin{array}{ll}1-v&\text{if $n|a$,}\\0&\text{otherwise.}
  \end{array}\right.
\end{equation}
Choose $r$ nonzero complex numbers $z_1,\ldots,z_r$ and associate one to each row,
as indicated in Figure~\ref{icestate}. The rows are labeled $r$ down to $1$
in descending order and $z_i$ is associated with the $i$-th row as in
Figure~\ref{icestate}. Given a vertex in the $i$-th row, 
its Boltzmann weight is given in Figure~\ref{gammaice} (top). Note that this weight
depends on the spins and the charges on adjacent
edges, and the row $i$ in which it appears. Then the Boltzmann
weight $B^{(n)}_{z_1, \ldots, z_r}(\mathfrak{s})$ of the state $\mathfrak{s}$ is the product of the Boltzmann weights
over all vertices in the grid. We often omit the $n$ or the $z_1, \ldots, z_r$ in the notation for $B$, as
the weights may be stated uniformly for all such choices.

\begin{remark} \label{gausssumremark}
In~\cite{mice} and~\cite{wmd5book}, the functions $g$ and $h$ are defined
using $n$-th order Gauss sums, with $v = q^{-1}$, and shown to satisfy the above
properties. We will use this specific choice later in Theorem~\ref{matchwhit} and
in Section~\ref{connex} to connect the partition function to metaplectic Whittaker
functions. However, only the above properties are required for their study using
the Yang-Baxter equation. The function
$g(a)$ is defined in (\ref{gausssumdef}) below, and $h(a)$ we already defined
by (\ref{hsumdef}).

\end{remark}

\begin{figure}
\[
\begin{array}{|c|c|c|c|c|c|}
\hline
\tt{a}_1&\tt{a}_2&\tt{b}_1&\tt{b}_2&\tt{c}_1&\tt{c}_2\\
\hline
\begin{array}{c}\gammaice{+}{+}{+}{+}{a+1}{a}\\1\end{array} &
\begin{array}{c}\gammaice{-}{-}{-}{-}{a}{a}\\z_i\end{array} &
\begin{array}{c}\gammaice{+}{-}{+}{-}{a+1}{a}\\g(a)\end{array} &
\begin{array}{c}\gammaice{-}{+}{-}{+}{a}{a}\\z_i\end{array} &
\begin{array}{c}\gammaice{-}{+}{+}{-}{a}{a}\\h(a)z_i\end{array} &
\begin{array}{c}\gammaice{+}{-}{-}{+}{a+1}{a}\\1\end{array} \\
\hline
\hline
\begin{array}{c}\gammaice{+}{+}{+}{+}{a+1}{a}\\1\end{array} &
\begin{array}{c}\gammaice{-}{-}{-}{-}{0}{0}\\z_i\end{array} &
\begin{array}{c}\gammaice{+}{-}{+}{-}{a+1}{a}\\g(a)\end{array} &
\begin{array}{c}\gammaice{-}{+}{-}{+}{a}{a}\\z_i\end{array} &
\begin{array}{c}\gammaice{-}{+}{+}{-}{0}{0}\\(1-v)z_i\end{array} &
\begin{array}{c}\gammaice{+}{-}{-}{+}{1}{0}\\1\end{array} \\
\hline
\end{array}
\]
\caption{The Boltzmann weights at a vertex. Top: the weights in
  \textit{Metaplectic Ice}~\cite{mice}. Bottom: the weights in
  this paper. These produce the same partition function but are
  subtly different in that the new weights satisfy a Yang-Baxter
  equation as in Theorem~\ref{gammaybe}. The illustrated
  vertices are in the $i$-th row and have charge $a$. (The charge is the
  number of $+$ signs in the row to the right of the vertex.) The
  Boltzmann weight of any configuration not appearing in this
  table is zero. In an $n$-admissible state, any horizontal edge with a $-$ spin 
  will have its charge divisible by $n$. Each admissible
  configuration is assigned a \textit{type}
  $\tt{a}_1$, $\tt{a}_2$,
  $\tt{b}_1$, $\tt{b}_2$,
  $\tt{c}_1$ or $\tt{c}_2$.}
\label{gammaice}
\end{figure}

\begin{example}
In the state in Figure~\ref{icestate} we look at the top row. Using the classification
of admissible configurations in Figure~\ref{gammaice} (top), the vertex in column $5$ is of
type $\tt{b}_1$ with charge $4$, so its Boltzmann weight is
$g(4)$. There are two vertices of type $\tt{c}_2$
in columns $3$ and $0$, a $\tt{c}_1$
vertex in column $2$, and $\tt{a}_1$ vertices in columns $4$ and $1$. 
The $\tt{c}_1$ vertex and one of the $\tt{c}_2$ vertices have charge
$2$, so the state is $n$-admissible only if $n=1$ or $2$.  Assuming
this, the $g(4)$ from the $\tt{b}_1$ vertex evaluates to $-v$ and the
$\tt{c}_1$ vertex evaluates to $(1-v)z_1$, while the remaining vertices in
the row have weight $1$. Thus the total contribution of this row to the weight of the state is
$(-v)(1-v)z_1$.  The second row has a vertex $\tt{c}_2$ (with charge 0)
and the remaining vertices are all of type $\tt{b}_2$ or $\tt{a}_2$, so this row
contributes $z_2^5$. The last row has a $\tt{c}_2$ vertex (with charge
$0$) and two $\tt{b}_2$ vertices. The remaining three vertices in the row
are of type $\tt{a}_1$ with Boltzmann weight $1$. The Boltzmann weight of
this state is $(-v)(1-v)z_1z_2^5z_3^2$ if $n=1$ or $2$, and $0$
otherwise.
\end{example}

\begin{proposition}
\label{c2lemma}
Suppose that $\mathfrak{s}$ is an admissible state such that
the Boltzmann weight $B^{(n)}(\mathfrak{s})\neq0$. Then the state is
$n$-admissible.
\end{proposition}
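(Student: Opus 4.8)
The plan is to trace how charges change as one moves along a row from right to left, vertex by vertex, and argue that a $-$ spin on a horizontal edge with charge not divisible by $n$ forces a zero Boltzmann weight somewhere in that row. First I would set up the bookkeeping: fix a row $i$ and walk leftward from column $0$ to column $N-1$. At each vertex the charge to the left of the vertex equals the charge to the right, plus $1$ if the spin on the horizontal edge to the left of the vertex is $+$, and plus $0$ if it is $-$ (this is just the definition of charge as the number of $+$ signs at or to the right of an edge). Using Figure~\ref{gammaice}, I would observe the key structural fact: among the six admissible vertex types, the horizontal-edge spin pattern (left spin, right spin) is $(+,+)$ for $a_1$, $(-,-)$ for $a_2$, $(+,+)$ for $b_1$, $(-,-)$ for $b_2$, $(-,+)$ for $c_1$, and $(+,-)$ for $c_2$. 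In particular, a horizontal edge carrying a $-$ spin occurs only as the right edge of an $a_2$, $b_2$, or $c_1$ vertex, and as the left edge of an $a_2$, $b_2$, or $c_2$ vertex.

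Next I would isolate the role of the vertex types whose weight involves $g$ or $h$, namely $b_1$ (weight $g(a)$) and $c_1$ (weight $h(a)z_i$). Since $h(a) = 1-v$ if $n \mid a$ and $0$ otherwise, a $c_1$ vertex contributes a nonzero weight only when its charge $a$ is divisible by $n$; but the right edge of a $c_1$ vertex carries a $-$ spin and has charge exactly $a$. So at every $c_1$ vertex in a state with nonzero Boltzmann weight, the adjacent $-$ edge already satisfies the $n$-admissibility condition. The remaining task is to show that the $-$ edges appearing to the right of $a_2$ and $b_2$ vertices also have charge $\equiv 0 \pmod n$. Here I would argue by induction moving leftward along the row: the rightmost horizontal $-$ edge in a given row must be the right edge of some $c_1$ vertex (because it is the leftmost $-$ among the edges at or to its right in... more carefully: reading from the right boundary, which is a $-$ edge in the full model but within a single row one tracks the local pattern) — the first transition from a run of edges into a $-$ edge, scanning leftward, happens at a $c_1$ vertex, pinning that charge to a multiple of $n$; thereafter a maximal block of consecutive $-$ edges is spanned by $a_2$ and $b_2$ vertices, across which the charge does not change (the horizontal spin stays $-$, so no $+$ is added), so every edge in the block inherits charge $\equiv 0$; the block terminates on the left at a $c_2$ vertex, whose left edge is again $-$ with the same charge, and whose weight is the constant $1$, causing no obstruction. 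One then continues scanning leftward to the next block of $-$ edges and repeats. This shows all horizontal $-$ edges in the row have charge divisible by $n$, which is exactly $n$-admissibility for that row; applying it to every row gives the proposition.

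The main obstacle, and the place I would be most careful, is the claim that scanning leftward the first $-$ edge in each maximal $-$-block is created by a $c_1$ vertex and that this vertex's weight is $h(a)$ with $a$ the charge of that very edge. This requires correctly matching the "charge at a vertex = charge on the edge to its right" convention with the left/right spin patterns in Figure~\ref{gammaice}, and checking the boundary behavior at the ends of the row (the right boundary edge is $-$, so the rightmost vertex's charge is $0$, consistent with a multiple of $n$; the left boundary edge is $+$). A secondary subtlety is ensuring that a $b_2$ vertex — whose weight is $z_i$, independent of charge, hence never zero — genuinely cannot create a new $-$ edge when scanned from the right, i.e. its right edge is $-$ only if its left edge is $-$; this is immediate from the $(-,-)$ pattern but deserves an explicit sentence. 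Once the leftward-scanning argument is pinned down, the rest is routine.
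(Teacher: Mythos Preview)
Your core idea is correct and matches the paper's: charge is constant along a run of $-$ horizontal edges, and the only way such a run can begin (scanning leftward past a $+$ edge) is at a $c_1$ vertex, whose weight $h(a)z_i$ is nonzero only when $n\mid a$. But your write-up repeatedly swaps left and right for $c_1$ and $c_2$, so several sentences are literally false. You say a $-$ spin ``occurs only as the right edge of an $a_2,b_2,c_1$ vertex'' (should be $c_2$; your own table has $c_1$ as $(-,+)$), that ``the right edge of a $c_1$ vertex carries a $-$ spin'' (it is $+$; the $-$ is on the left), and that the block ``terminates on the left at a $c_2$ vertex, whose left edge is again $-$'' ($c_2$'s left edge is $+$). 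With these orientation errors fixed, your scan works; as written it does not parse.

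The paper's version is the same mechanism packaged as a single extremal step rather than a block-by-block induction. Take the rightmost $-$ edge $e$ whose charge is not divisible by $n$ (it is not the right boundary edge, which has charge $0$), and look at the vertex $v$ immediately to its right. The edge to the right of $v$ cannot be $-$, since it would then have the same charge as $e$ and be a strictly more rightward counterexample. So that edge is $+$, forcing $v$ to have horizontal pattern $(-,+)$, i.e.\ type $c_1$, and its weight $h(a)z_i$ vanishes because $a$ equals the common charge of the two edges, which is not divisible by $n$.
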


\begin{proof}
We must show that, under this assumption, the charge on every edge with spin $-$
is a multiple of~$n$. Suppose not and consider the right-most vertex in any row where this
condition fails; that is, the charge $a$ of the edge to the right is not a multiple of $n$.
We claim that the edge to the right of $v$ has spin $+$.
We know that $v$ is not the rightmost vertex in its row
since its charge is nonzero. So if the edge to the right
of $v$ has spin $-$, then the vertex to the right of $v$
has the same charge as $v$, contradicting our assumption
that $v$ is the rightmost counterexample in its row.

Since the edge to the left of $v$ is $-$ and the
edge to the right is $+$, consulting Figure~\ref{gammaice} (top)
we see that the only admissible configuration of spins at the vertex
$v$ is of type $\tt{c}_1$, so the Boltzmann weight
at $v$ is $h(a)\,z_i=0$ because $n\nmid a$. This
contradicts our assumption that $B^{(n)}(\mathfrak{s})\neq 0$.
\end{proof}

We may now explain the distinction between the system
in~\cite{mice} and the one used throughout this paper. Let $\mathfrak{S}'_\lambda$ denote the
set of admissible states, and let $\mathfrak{S}_\lambda$ denote the
smaller set of $n$-admissible states. In~\cite{mice}, the
partition function $Z(\mathfrak{S}'_\lambda)$ is defined
to be the sum of $B^{(n)}(\mathfrak{s})$ where $\mathfrak{s}$ runs over
$\mathfrak{S}'_\lambda$. In this paper, we consider
partition function $Z(\mathfrak{S}_\lambda)$, the sum over
$n$-admissible states.

\begin{theorem} \label{matchwhit}
  Let $\lambda$ be a partition with $r$ parts and let $n$ be a fixed positive integer. Then
  the partition function $Z(\mathfrak{S}_\lambda)$ is 
  (up to normalization) a value of a $p$-adic spherical Whittaker function on the
  metaplectic $n$-fold cover of $\GL(r,F)$, where $F$ is a nonarchimedean
  local field with residue field of cardinality $q \equiv 1 \; (2n)$.
\end{theorem}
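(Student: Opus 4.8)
The statement is Theorem~4 of~\cite{mice}. Strictly speaking there is little genuinely new to prove here: the Boltzmann weights of Figure~\ref{gammaice} differ from the ones used in~\cite{mice} only by a redistribution of powers of $z_i$ and of Gauss sums among the six vertex types, and one checks that this changes the weight of each admissible state only by a fixed state-independent factor, so that $Z(\mathfrak{S}_\lambda^{(n)})$ is unchanged up to normalization. I will therefore only sketch the argument of~\cite{mice} that identifies this partition function with a metaplectic Whittaker value.

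The plan is to compare $Z(\mathfrak{S}_\lambda^{(n)})$ term by term with an explicit combinatorial formula for the metaplectic spherical Whittaker function---the Gelfand--Tsetlin (equivalently, crystal-graph) expansion due to Chinta--Offen~\cite{ChintaOffen} and McNamara~\cite{McNamaraCS}. In that formula the value, suitably normalized, of a particular spherical Whittaker function at $\mathbf{s}(\operatorname{diag}(p^{\lambda_1},\dots,p^{\lambda_r}))$ is a sum over strict Gelfand--Tsetlin patterns with top row $\lambda+\rho$ (equivalently, over the vertices of the $\GL_r$-crystal $B_{\lambda+\rho}$); each pattern contributes a monomial in $z_1,\dots,z_r$ recording its weight, multiplied by a product of $n$-th order Gauss sums indexed by its Berenstein--Zelevinsky--Littelmann coordinates, and the whole sum carries a fixed monomial prefactor reflecting the choice of standard section $\mathbf{s}$. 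The congruence $q\equiv1\pmod{2n}$ is exactly what places the $2n$-th roots of unity in $F$, so that the metaplectic cover and its Gauss sums---hence the functions $g$ and $h$ of this section---are defined.

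First I would set up the classical dictionary between six-vertex states and Gelfand--Tsetlin patterns: at each horizontal level of the grid record the set of columns carrying a $-$ spin; reading downward, these sets interleave precisely because the only admissible vertex configurations are the six of Figure~\ref{gammaice}, and the top level reproduces $\lambda+\rho$. By Proposition~\ref{c2lemma} the states surviving with nonzero weight are exactly the $n$-admissible ones, and this matches the support condition---a nonvanishing Gauss sum---in the Whittaker formula, so one gets a weight-respecting bijection between $\mathfrak{S}_\lambda^{(n)}$ and the indexing set of the formula. Then I would match weights vertex by vertex, organized by the row transfer matrix so that the $i$-th row corresponds to the variable $z_i$: the vertices of types $a_2$, $b_2$, $c_1$ each contribute one factor $z_i$, and their count reproduces the $z_i$-exponent of the crystal weight; the $b_1$ vertices contribute the Gauss sums $g(a)$ and the $c_1$ vertices the extra factors $h(a)$, which---using $g(a+n)=g(a)$ and $g(a)\,g(n-a)=v$---reassemble into the decorated Gauss-sum product attached to the corresponding path (the boxed and circled entries in the Berenstein--Zelevinsky--Littelmann description), while the weight-$1$ vertices of types $a_1$ and $c_2$ contribute nothing. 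Summing over all states then yields the Whittaker value up to the asserted normalization. Alternatively, one can argue by induction on $r$: the partition function satisfies the transfer-matrix recursion in the last variable, and one shows the metaplectic Whittaker function satisfies the same recursion, with an immediate base case $r=1$.

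The step I expect to be the main obstacle is the vertex-by-vertex weight matching. It involves a somewhat delicate case analysis---when a Gelfand--Tsetlin entry is extremal within its permitted interval the associated vertex type, and with it the choice between a Gauss-sum factor and a power of $z_i$, changes---and it requires careful bookkeeping of the interaction between the charge (which the text is careful to keep as an honest integer, not an integer modulo~$n$) and the periodicity of $g$. This, together with verifying that the boundary conditions force precisely the strict patterns with top row $\lambda+\rho$ and pinning down the monomial normalization, is where the real work of~\cite{mice} lies; the remaining ingredients---the equivalence of the two weight systems and the combinatorial bijection---are routine once the dictionary is in place.
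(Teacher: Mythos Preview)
Your proposal is correct in spirit, but note that the paper does not actually prove this theorem: it is stated with the attribution ``Theorem~4 of~\cite{mice}'' and no argument is given beyond the remark that a more refined version appears later as Proposition~\ref{refinedmatch}. So there is no ``paper's own proof'' to compare against; you have supplied a sketch of the argument from the cited source, which is more than the present paper does.

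Your sketch of the \cite{mice} argument is accurate: the bijection between admissible states and strict Gelfand--Tsetlin patterns with top row $\lambda+\rho$ (this bijection is in fact recalled and used in the proof of Proposition~\ref{teststate} in this paper), followed by a vertex-by-vertex matching of Boltzmann weights against the Gauss-sum decorated crystal formula for the Whittaker function. Your observation that the weights here differ from those of \cite{mice} only in a way that leaves the partition function unchanged is exactly what the introduction asserts (``This change does not affect the partition function, but it makes possible a Yang-Baxter equation''). The one place where the paper's later refinement (Proposition~\ref{refinedmatch}) adds something beyond your sketch is the decomposition via the functionals $\mathcal{L}_a$ indexed by $\tilde{T}/H$, which separates the Whittaker function into pieces corresponding to fixed left-boundary charges; this is orthogonal to the basic identification you describe and is not needed for the theorem as stated.
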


\begin{proof}
  By Proposition~\ref{c2lemma},
  $Z(\mathfrak{S}'_\lambda)=Z(\mathfrak{S}_\lambda)$. Combining this with
  Theorem~4 of~\cite{mice}, the statement follows.
\end{proof}

In this result, $g(a)$ and $h(a)$ appearing in the Boltzmann weights for $Z$
are $n$-th order Gauss sums as explained in Remark~\ref{gausssumremark} and
$v= q^{-1}$ where $q$ is the cardinality of the residue field of $F$.

\begin{remark} Theorem~4 of \cite{mice} depends on the crystal description
  of the Type~A Whittaker functions proved in \cite{McNamaraDuke}. (See also
  \cite{eisenxtal}.)  However \cite{McNamaraDuke} treats simple groups such as
  $\SL(r)$. This apparent gap can be remedied by noting that the metaplectic
  Casselman-Shalika formula proved for reductive groups in \cite{McNamaraCS}
  is equivalent to the needed crystal description by results of
  Puskas~\cite{Puskas}.
\end{remark}

\begin{remark} The normalization that is unspecified in this statement will
  be made precise in (\ref{thmoneprec}). Moreover,
  Theorem~\ref{whitpartcharge} is a generalization of this
  result that expresses a basis of all $n^r$ spherical Whittaker functions
  as partition functions of Gamma ice, and also elucidates the relationship
  with Theorem~\ref{commdiagwithintertwiner}.
\end{remark}

To summarize, we may strictly limit the admissible states so
that $-a$ only occurs with the charge $a\equiv 0$ modulo~$n$.
Thus we may use the Boltzmann weights of Figure~\ref{gammaice} (bottom). The
restriction to $n$-admissible states does not change the partition function
but has the benefit of making the model solvable in the sense of Baxter. This
means that it is amenable to study by the Yang-Baxter equation.

\section{The Yang-Baxter equation\label{ybesection}}
The partition functions described in Section~\ref{partitionfunction}
differ from those of the classical six-vertex model in a crucial way: the
Boltzmann weights depend on a global statistic, the charge. If we wish
to use statistical mechanical techniques like the Yang-Baxter equation, we
need the weight at any vertex to be \textit{local}, that is, depending only on
nearest-neighbor interactions. We achieve this by a slight change in point of view, 
introducing \textit{decorated spins} for the horizontal edges. Given a fixed positive integer $n$,
a \textit{decorated spin} is an ordered
pair $(\sigma,a)$ where the \textit{spin} $\sigma$ is $+$ or $-$ and the \textit{decoration} $a$ is
an integer mod $n$. Moreover if $\sigma=-$, we will only consider $a \equiv 0$ mod $n$. 
In figures we will sometimes draw the spin $\sigma$ in a circle and
write the decoration $a$ next to it. In text we will denote $(\sigma,a)$ as
$\sigma a$. The key point is that the decoration is now viewed as part of the data attached to
a horizontal edge. Now there are $n+1$ possible decorated spins for horizontal edges, 
rather than just the spins $+$ and $-$; we have left the six-vertex model.

Not all choices of decorated spins on horizontal edges will have nonzero Boltzmann weight. Each decoration $a$ on
a horizontal edge to the left of a vertex must be compatible with its spin $\sigma$ and the decoration $b$ on the edge 
to the right. If $\sigma = +$, then $a \equiv b+1$ (mod $n$) and 
if $\sigma = -$, then $a \equiv b$. If we set the initial decorations of the right-hand boundary edges 
(which all have spin $-$) to be 0, then this rule clearly recovers the charge (mod $n$) of the previous section. 
Thus the Boltzmann weights in Figure~\ref{gammaice} may be interpreted as purely
local; in the figure, we have indicated the decoration by writing
it over the spin. We are justified in requiring the
decoration at a $-$ edge to be $0$ modulo~$n$ (without affecting the resulting partition functions) by
Proposition~\ref{c2lemma}.


Now that the weights at any vertex may be viewed as local, we are ready to present
our solution to the Yang-Baxter equation. Three sets of vertices with different Boltzmann 
weights will appear in the Yang-Baxter equation (Theorem~\ref{gammaybe}) below. In figures, these will be 
labeled $z_i$, $z_j$ and $R_{z_i,z_j}$. Here $z_i$ and $z_j$ are nonzero complex 
numbers used in the Boltzmann weight of the associated vertex. At the vertices with labels 
$z_i$ and $z_j$ we will use the Boltzmann weights already described in Figure~\ref{gammaice}.
In Figure~\ref{gamgamice} we describe the Boltzmann weights at the vertices
labeled $R_{z_i,z_j}$.  The Boltzmann weights depend only on the residue
classes modulo $n$ of the integers $a,b,c,\ldots$ that appear in these
formulas, but in some cases depend on a particular choice of representatives
for residue classes. These choices are indicated in the description below the figure.

\begin{figure}[h]
\[\begin{array}{|c|c|c|c|c|c|}\hline
\begin{array}{c}\gamgam{+}{+}{+}{+}{a}{a}{a}{a}\\z_i^n-vz_j^n\end{array} & 
\begin{array}{c}\gamgam{+}{+}{+}{+}{b}{a}{b}{a}\\g(a-b)(z_j^n-z_i^n)\end{array} & 
\begin{array}{c}\gamgam{+}{+}{+}{+}{b}{a}{a}{b}\\(1-v)z_j^cz_i^{n-c}\;\text{(*)}\end{array} & 
\begin{array}{c}\gamgam{-}{-}{-}{-}{0}{0}{0}{0}\\z_j^n-vz_i^n\end{array} \\ 
\hline
\begin{array}{c}\gamgam{+}{-}{+}{-}{a}{0}{a}{0}\\v(z_j^n - z_{i}^{n})\end{array} & 
\begin{array}{c}\gamgam{-}{+}{-}{+}{0}{a}{0}{a}\\z_j^n - z_{i}^{n}\end{array} & 
\begin{array}{c}\gamgam{-}{+}{+}{-}{0}{a}{a}{0}\\(1-v)z_j^az_i^{n-a}\;\text{(**)}\end{array} & 
\begin{array}{c}\gamgam{+}{-}{-}{+}{a}{0}{0}{a}\\(1-v)z_j^{n-a}z_i^a\;\text{(**)}\end{array} \\ 
\hline\end{array}
\]
\caption{\parindent=0pt\parskip=0pt
Boltzmann weights for the $R$-vertex $R_{z_i,z_j}$. It is assumed that $b$ is
not equal to $a$.
(*) Here $c\equiv a-b$ mod $n$ with $0\leqslant c<n$.
(**) Here we choose the representative of $a$ modulo $n$ with $1\leqslant
  a\leqslant n$, so if $a\equiv0$ mod $n$, $n-a$ means $0$, not~$n$.}
\label{gamgamice}
\end{figure}

\begin{theorem}
\label{gammaybe}
The partition functions of the following two systems are
equal. That is, if we fix the charges $\sigma$, $\tau$, $\beta$, $\rho$, $\alpha$ and $\theta$
and the decorations $a,b,c,d$, and sum over all possible values of the inner
edge (decorated) spins, we obtain the same result in both cases.
 \vglue -20pt
\begin{equation}\label{mybe}
\begin{array}{c} \lhs{\botcharge{\sigma}{a}}{\topcharge{\tau}{b}}{\beta}{\topcharge{\theta}{c}}{\botcharge{\rho}{d}}{\alpha}{\topcharge{\nu}{e}}{\botcharge{\mu}{f}}{\gamma} \end{array}
\qquad \qquad
\begin{array}{c} \rhs{\botcharge{\sigma}{a}}{\topcharge{\tau}{b}}{\beta}{\topcharge{\theta}{c}}{\botcharge{\rho}{d}}{\alpha}{\botcharge{\phi}{g}}{\topcharge{\psi}{h}}{\delta} \end{array}
\end{equation}
 \vglue 15pt
\end{theorem}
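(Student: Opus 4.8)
The plan is to verify the Yang--Baxter equation by a direct case analysis, exploiting the conservation laws that drastically limit the number of cases. First I would observe that both sides of \eqref{mybe} are sums over states of a small fixed configuration: on the left, the three inner edges carry decorated spins, and likewise on the right, and in each case the admissibility conditions at each of the three vertices (type $z_1$, type $z_2$, type $R_{z_1,z_2}$ read off from Figures~\ref{gammaice} and~\ref{gamgamice}) force nearly all of the inner spins once the six boundary spins $\sigma,\tau,\beta,\rho,\alpha,\mu$ are fixed. The key structural input is a \emph{conservation of spin}: at every admissible vertex the multiset of spins on the two incoming edges equals the multiset on the two outgoing edges (equivalently, the number of $-$ spins is conserved along each of the two crossing strands). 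This is visible in Figure~\ref{gammaice} and Figure~\ref{gamgamice}. Consequently the six external spins must already be ``balanced'' for the partition function to be nonzero, and one checks that the boundary data falls into a short list of sectors indexed by how many of the external strands carry a $-$.

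Next I would organize the verification by these sectors. In the sector with no $-$ spins at all, every edge is a $+$ edge and the only data is the decorations (integers mod $n$); the relevant weights are the top row of Figure~\ref{gamgamice} together with the $a_1$, $b_1$, $c_2$ weights of Figure~\ref{gammaice}, and the identity to be checked reduces to the ordinary (Drinfeld-twisted) $\widehat{\mathfrak{gl}}(n)$ Yang--Baxter equation for the $R$-matrix tabulated in the introduction; here the Gauss-sum relations $g(0)=-v$ and $g(a)g(n-a)=v$ for $n\nmid a$, and the identity $h(a)=1-v$ or $0$, are exactly what is needed to match terms. In the sectors containing one or two $-$ spins, one additionally uses that a $-$ edge forces its decoration to be $0$ mod $n$, so the bookkeeping of decorations simplifies; the relevant new weights are the bottom row of Figure~\ref{gamgamice} and the $a_2,b_2,c_1$ weights of Figure~\ref{gammaice}. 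In each sector there are only a handful of admissible internal configurations (typically one, two, or three on each side), so each identity is a polynomial identity in $z_1,z_2$ (and the $g$-values) that can be checked termwise after clearing the common denominator — note that once one restores the homogeneous form $z_1^n, z_2^n$ as in Figure~\ref{gamgamice}, the denominators $1-v(z_1/z_2)^{-n}$ disappear and everything is polynomial.

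The main obstacle, and the place where genuine care is required rather than routine algebra, is the branch-cut-like splitting in the $c$-type weights: the weights $(1-v)z_1^c z_2^{n-c}$ with $c\equiv a-b$ and $0\le c<n$, and the $(**)$ weights where one chooses $1\le a\le n$, depend on a specific choice of representative and not merely on the residue class. When two strands cross, the decoration label of an edge shifts by $\pm 1$ as it passes a $z_i$-vertex (recall $a=b+1$ for a $+$ edge), so along the two sides of \eqref{mybe} the relevant representatives can land on opposite sides of the cutoff $0$ (or $n$). Reconciling the two sides then requires splitting each sector further into subcases according to the ordering of the representatives — this is precisely the $a>b$ versus $a<b$ dichotomy already visible in the introduction's table — and checking that the powers of $z_1,z_2$ produced on the two sides agree in each subcase. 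I expect this to be the crux: once the representative conventions are pinned down consistently, the remaining identities follow from the Gauss-sum relations and straightforward polynomial comparison. Finally, I would remark that the computation is most transparently done by checking that $\tau R_{z_1,z_2}$ intertwines the module structures, i.e.\ by the standard argument that the braided Yang--Baxter relation for the three $R$-type vertices is equivalent to \eqref{mybe} after one identifies the $z_i$-vertices with the action on $V_+(z_i)\otimes V_\pm(w)$; but the self-contained route is the finite case check just described.
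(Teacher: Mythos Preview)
Your proposal is essentially correct and matches the paper's approach: a direct, exhaustive case analysis over the boundary spins, broken further into subcases by the decorations, with each resulting identity verified as a polynomial identity in $z_1,z_2$ using the relations $g(0)=-v$ and $g(a)g(-a)=v$ for $n\nmid a$. The paper organizes the check by listing all $32$ boundary spin configurations (the parity constraint you noted) and working through the subcases for each; your organization by the number of $-$ spins amounts to the same computation grouped differently.

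Two small corrections. First, your conservation statement is slightly off: spin is \emph{not} conserved along each individual strand (the $c_1$ and $c_2$ patterns in Figure~\ref{gammaice} flip the horizontal and vertical spins simultaneously), only the total number of $-$ spins at each vertex is conserved modulo~2. This is what gives the parity constraint on the boundary but it does not let you freeze internal spins strand-by-strand. Second, the all-$+$ sector does \emph{not} literally reduce to the $\widehat{\mathfrak{gl}}(n)$ Yang--Baxter equation from the introduction: that equation is the $RRR$ relation of Theorem~\ref{firstbraid}, whereas \eqref{mybe} is an $RTT$-type relation involving the $z_i$-vertices of Figure~\ref{gammaice}, which are not themselves copies of the $R$-matrix. (In the paper the logic runs the other way: Theorem~\ref{gammaybe} is proved first by brute force, and Theorem~\ref{firstbraid} is then \emph{deduced} from it via an auxiliary lattice argument in Section~\ref{tfproofs}.) So your closing remark about an alternative ``intertwining'' route would be circular here; the finite case check is the actual content.
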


\begin{proof}
In every admissible configuration
there are an even number of $+$ spins on the six boundary
edges. Therefore there are 32 possible boundary spin choices,
and we must consider each of these cases separately. Moreover, each case
breaks into subcases depending on the decorations at horizontal boundary edges
with spins $\sigma,\tau,\theta$ and $\rho$. To give the reader a feeling for the
possibilities, we will do one case in detail.
The remaining cases may be found in~\cite{ThisPaper}.

We will consider Case~10, using the enumeration of cases in~\cite{ThisPaper}, whose assignment
of boundary spins is 
$(\sigma,\tau,\beta,\theta,\rho,\alpha)=(+,+,-,+,-,+)$.

%
\medbreak\noindent
\textbf{Case 10a}: With $k\neq 0$, suppose that the (decorated)
spins on the six boundary edges are as follows:
\[ \begin{array}{|c|c|c|c|c|c|}
     \hline
     \begin{array}{c}a\\ \sigma\end{array} & 
     \begin{array}{c}b\\ \tau\end{array} & 
     \begin{array}{c}\\ \beta\end{array} & 
     \begin{array}{c}c\\ \theta\end{array} & 
     \begin{array}{c}d\\ \rho \end{array} & 
     \begin{array}{c}\\ \alpha\end{array}\\ 
     \hline
     \begin{array}{c} \scriptstyle k + 1\\ + \end{array} & 
     \begin{array}{c} 1\\ + \end{array} & 
     \begin{array}{c} \\ - \end{array} & 
     \begin{array}{c} k\\ + \end{array} & 
     \begin{array}{c} 0\\ - \end{array} & 
     \begin{array}{c} \\  + \end{array}\\
     \hline
   \end{array} \]
On each side there is one $n$-admissible state:
\[\begin{array}{cc}\text{left hand side}&\text{right hand side}\\
     \begin{array}{|c|c|c|c|}
     \hline
     \begin{array}{c} e\\\nu \end{array} & \begin{array}{c} f\\ \mu 
     \end{array} & \begin{array}{c}\\ \gamma \end{array} & 
     \text{weight}\\\hline
     \begin{array}{c} k + 1\\ + \end{array} & 
     \begin{array}{c} 1\\ + \end{array} & 
     \begin{array}{c} \\ - \end{array} & 
     (z_{j}^{n}-z_i^n)\,g(k)\,g(-k)\\
     \hline
   \end{array}
 &
     \begin{array}{|c|c|c|c|}
     \hline
     \begin{array}{c}g\\\phi\end{array} & 
     \begin{array}{c} h\\ \psi \end{array} & 
     \begin{array}{c} \\ \delta\end{array} & \text{weight}\\
     \hline
     \begin{array}{c} k\\+ \end{array} & 
     \begin{array}{c} 0\\ -\end{array} & 
     \begin{array}{c} \\ +\end{array} &
     (z_{j}^{n}-z_i^n)\,v\\
     \hline
   \end{array}
\end{array}\;.\]
Thus the configurations are as follows:
 \vglue -15pt
\[
\lhs{\botcharge{+}{k + 1}}{\topcharge{+}{1}}{-}{\topcharge{+}{k}}{\botcharge{-}{0}}{+}{\topcharge{+}{k + 1}}{\botcharge{+}{1}}{-}
\qquad\qquad
\rhs{\botcharge{+}{k + 1}}{\topcharge{+}{1}}{-}{\topcharge{+}{k}}{\botcharge{-}{0}}{+}{\botcharge{+}{k}}{\topcharge{-}{0}}{+}\]
\vglue 15pt
Since $g(k)\,g(-k)=v$, (\ref{mybe}) is satisfied in this case.

\medbreak\noindent
\textbf{Case 10b}: With $k\neq 0$,
\[ \begin{array}{|c|c|c|c|c|c|}
     \hline
     \begin{array}{c}a\\ \sigma\end{array} & 
     \begin{array}{c}b\\ \tau\end{array} & 
     \begin{array}{c}\\ \beta\end{array} & 
     \begin{array}{c}c\\ \theta\end{array} & 
     \begin{array}{c}d\\ \rho \end{array} & 
     \begin{array}{c}\\ \alpha\end{array}\\ 
     \hline
     \begin{array}{c} 1\\ + \end{array} & 
     \begin{array}{c} \scriptstyle k+1\\ + \end{array} & 
     \begin{array}{c} \\ - \end{array} & 
     \begin{array}{c} k\\ + \end{array} & 
     \begin{array}{c} 0\\ - \end{array} & 
     \begin{array}{c} \\  + \end{array}\\
     \hline
   \end{array} \]

\[\begin{array}{cc}\text{left hand side}&\text{right hand side}\\
     \begin{array}{|c|c|c|c|}
     \hline
     \begin{array}{c} e\\\nu \end{array} & \begin{array}{c} f\\ \mu 
     \end{array} & \begin{array}{c}\\ \gamma \end{array} & 
     \text{weight}\\\hline
     \begin{array}{c} \scriptstyle k+1\\ + \end{array} & 
     \begin{array}{c} 1\\ + \end{array} & 
     \begin{array}{c} \\ - \end{array} & 
     (1-v)\,z_j^k\,z_i^{n-k}\,g(k)\\
     \hline
   \end{array}
 &
     \begin{array}{|c|c|c|c|}
     \hline
     \begin{array}{c}g\\\phi\end{array} & 
     \begin{array}{c} h\\ \psi \end{array} & 
     \begin{array}{c} \\ \delta\end{array} & \text{weight}\\
     \hline
     \begin{array}{c} 0\\- \end{array} & 
     \begin{array}{c} k\\ +\end{array} & 
     \begin{array}{c} \\ -\end{array} &
     (1-v)\,z_j^k\,z_i^{n-k}\,g(k)\\
     \hline
   \end{array}
\end{array}\]

\medbreak\noindent
\textbf{Case 10c}:
\[ \begin{array}{|c|c|c|c|c|c|}
     \hline
     \begin{array}{c}a\\ \sigma\end{array} & 
     \begin{array}{c}b\\ \tau\end{array} & 
     \begin{array}{c}\\ \beta\end{array} & 
     \begin{array}{c}c\\ \theta\end{array} & 
     \begin{array}{c}d\\ \rho \end{array} & 
     \begin{array}{c}\\ \alpha\end{array}\\ 
     \hline
     \begin{array}{c} 1\\ + \end{array} & 
     \begin{array}{c} 1\\ + \end{array} & 
     \begin{array}{c} \\ - \end{array} & 
     \begin{array}{c} 0\\ + \end{array} & 
     \begin{array}{c} 0\\ - \end{array} & 
     \begin{array}{c} \\  + \end{array}\\
     \hline
   \end{array} \]

\[\begin{array}{cc}\text{left hand side}&\text{right hand side}\\
     \begin{array}{|c|c|c|c|}
     \hline
     \begin{array}{c} e\\\nu \end{array} & \begin{array}{c} f\\ \mu 
     \end{array} & \begin{array}{c}\\ \gamma \end{array} & 
     \text{weight}\\\hline
     \begin{array}{c} 1\\ + \end{array} & 
     \begin{array}{c} 1\\ + \end{array} & 
     \begin{array}{c} \\ - \end{array} & 
     v(z_j^n\,v - z_i^n)\\
     \hline
   \end{array}
 &
     \begin{array}{|c|c|c|c|}
     \hline
     \begin{array}{c}g\\\phi\end{array} & 
     \begin{array}{c} h\\ \psi \end{array} & 
     \begin{array}{c} \\ \delta\end{array} & \text{weight}\\
     \hline
     \begin{array}{c} 0\\- \end{array} & 
     \begin{array}{c} 0\\ +\end{array} & 
     \begin{array}{c} \\ -\end{array} &
     (v-1)\,v\,z_j^n\\
     \hline
     \begin{array}{c} 0\\+ \end{array} & 
     \begin{array}{c} 0\\ -\end{array} & 
     \begin{array}{c} \\ +\end{array} &
     (z_j^n-z_i^n)\,v\\
     \hline
   \end{array}
\end{array}\]

This exhausts all possible choices of decorations on boundary edges, and hence completes the proof of Case 10. See the appendix in~\cite{ThisPaper}
for the other cases.
\end{proof}

There is another Yang-Baxter equation to be mentioned.

\begin{theorem}
\label{firstbraid}
Let $z_i$, $z_j$ and $z_k$ be given. Then for every
choice of decorated boundary spins $\alpha, \beta, \gamma, \delta, \epsilon,\phi$,
the partition functions of the following two systems are equal:
\begin{equation} \label{paramybe} \begin{array}{c} \resizebox{5cm}{4cm}{
\begin{tikzpicture}
\coordinate (a1) at (0,0);
\coordinate (b1) at (0,2);
\coordinate (c1) at (0,4);
\coordinate (a2) at (2,0);
\coordinate (b2) at (2,2);
\coordinate (c2) at (2,4);
\coordinate (a3) at (4,0);
\coordinate (b3) at (4,2);
\coordinate (c3) at (4,4);
\coordinate (a4) at (6,0);
\coordinate (b4) at (6,2);
\coordinate (c4) at (6,4);
\coordinate (r) at (3,3);
\coordinate (s) at (1,1);
\coordinate (t) at (5,1);
\draw (a1) to [out=0,in=180] (b2) to [out=0,in=180] (c3) to [out=0,in=180] (c4);
\draw (b1) to [out=0,in=180] (a2) to [out=0,in=180] (a3) to [out=0,in=180] (b4);
\draw (c1) to [out=0,in=180] (c2) to [out=0,in=180] (b3) to [out=0,in=180] (a4);
\draw[fill=white] (a1) circle (.25);
\draw[fill=white] (b1) circle (.25);
\draw[fill=white] (c1) circle (.25);
\draw[fill=white] (a4) circle (.25);
\draw[fill=white] (b4) circle (.25);
\draw[fill=white] (c4) circle (.25);
\path[fill=white] (r) circle (.25);
\path[fill=white] (s) circle (.25);
\path[fill=white] (t) circle (.25);
\node at (t) {$R_{z_j,z_k}$};
\node at (r) {$R_{z_i,z_k}$};
\node at (s) {$R_{z_i,z_j}$};
\node at (a1) {$\alpha$};
\node at (b1) {$\beta$};
\node at (c1) {$\gamma$};
\node at (c4) {$\delta$};
\node at (b4) {$\epsilon$};
\node at (a4) {$\phi$};
\end{tikzpicture}} \end{array} \; \qquad\qquad
\begin{array}{c} \resizebox{5cm}{4cm}{
\begin{tikzpicture}
\coordinate (a1) at (0,0);
\coordinate (b1) at (0,2);
\coordinate (c1) at (0,4);
\coordinate (a2) at (2,0);
\coordinate (b2) at (2,2);
\coordinate (c2) at (2,4);
\coordinate (a3) at (4,0);
\coordinate (b3) at (4,2);
\coordinate (c3) at (4,4);
\coordinate (a4) at (6,0);
\coordinate (b4) at (6,2);
\coordinate (c4) at (6,4);
\coordinate (r) at (3,1);
\coordinate (s) at (1,3);
\coordinate (t) at (5,3);
\draw (c1) to [out=0,in=180] (b2) to [out=0,in=180] (a3) to [out=0,in=180] (a4);
\draw (b1) to [out=0,in=180] (c2) to [out=0,in=180] (c3) to [out=0,in=180] (b4);
\draw (a1) to [out=0,in=180] (a2) to [out=0,in=180] (b3) to [out=0,in=180] (c4);
\draw[fill=white] (a1) circle (.25);
\draw[fill=white] (b1) circle (.25);
\draw[fill=white] (c1) circle (.25);
\draw[fill=white] (a4) circle (.25);
\draw[fill=white] (b4) circle (.25);
\draw[fill=white] (c4) circle (.25);
\path[fill=white] (r) circle (.25);
\path[fill=white] (s) circle (.25);
\path[fill=white] (t) circle (.25);
\node at (s) {$R_{z_j,z_k}$};
\node at (r) {$R_{z_i,z_k}$};
\node at (t) {$R_{z_i,z_j}$};
\node at (a1) {$\alpha$};
\node at (b1) {$\beta$};
\node at (c1) {$\gamma$};
\node at (c4) {$\delta$};
\node at (b4) {$\epsilon$};
\node at (a4) {$\phi$};
\end{tikzpicture}} \end{array}
\end{equation}
\end{theorem}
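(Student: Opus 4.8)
The plan is to recognize the two partition functions in \eqref{paramybe} as the two sides of a parametrized Yang--Baxter equation for the operator $\check{R}_{z_i,z_j}:=\tau R_{z_i,z_j}$, and then to deduce that equation from Theorem~\ref{gammaybe} by a ``train'' argument. First I would read off the diagrams: writing $\check{R}_{z_i,z_j}\colon V_\pm(z_i)\otimes V_\pm(z_j)\to V_\pm(z_j)\otimes V_\pm(z_i)$ for the operator whose matrix entries are the Boltzmann weights of Figure~\ref{gamgamice} (by Proposition~\ref{c2lemma} only the visible decorated spins occur, so this is well defined), the left-hand partition function equals the matrix coefficient of $\check{R}_{z_1,z_2}\check{R}_{z_1,z_3}\check{R}_{z_2,z_3}$ between the boundary vectors $\alpha\otimes\beta\otimes\gamma$ and $\delta\otimes\epsilon\otimes\phi$, and the right-hand side equals the matrix coefficient of $\check{R}_{z_2,z_3}\check{R}_{z_1,z_3}\check{R}_{z_1,z_2}$ between the same vectors. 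Thus the theorem is equivalent to the operator identity $\check{R}_{z_1,z_2}\check{R}_{z_1,z_3}\check{R}_{z_2,z_3}=\check{R}_{z_2,z_3}\check{R}_{z_1,z_3}\check{R}_{z_1,z_2}$ on $V_\pm(z_1)\otimes V_\pm(z_2)\otimes V_\pm(z_3)$, and since both sides are Laurent polynomials in $z_1,z_2,z_3$ it suffices to prove it for generic parameters, where each $\check{R}_{z_i,z_j}$ is invertible.

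Next I would form the monodromy operator $T_z$ obtained by multiplying $N$ of the vertex weights of Figure~\ref{gammaice} along a single row, acting on $V_\pm(z)\otimes\mathcal H$ with $\mathcal H=(\mathbb C^2)^{\otimes N}$ the space of spins on the $N$ vertical edges (the decoration on the far right fixed to $-0$). Applying Theorem~\ref{gammaybe} $N$ times, sliding the $R$-vertex across the two rows one column at a time, yields the intertwining relation $\check{R}_{z_1,z_2}\,(T_{z_1})_1(T_{z_2})_2=(T_{z_1})_2(T_{z_2})_1\,\check{R}_{z_1,z_2}$ on $V_\pm(z_1)\otimes V_\pm(z_2)\otimes\mathcal H$, the subscript indicating which of the two auxiliary factors $T$ acts on. Using this three times, in the two orders prescribed by the two sides of the braid relation, to move $(T_{z_1})_1(T_{z_2})_2(T_{z_3})_3$ to the reversed product $(T_{z_3})_1(T_{z_2})_2(T_{z_1})_3$, one finds that the two candidate operators $A:=\check{R}_{z_1,z_2}\check{R}_{z_1,z_3}\check{R}_{z_2,z_3}$ and $B:=\check{R}_{z_2,z_3}\check{R}_{z_1,z_3}\check{R}_{z_1,z_2}$ effect the same conjugation; hence $AB^{-1}$ is a well-defined endomorphism of $V_\pm(z_3)\otimes V_\pm(z_2)\otimes V_\pm(z_1)$ that commutes with $(T_{z_3})_1(T_{z_2})_2(T_{z_1})_3$ for every $N$.

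The main obstacle is the final ``sufficiency of states'' step, namely that this commutation forces $AB^{-1}$ to be a scalar. I would prove that for $N$ large the blocks of $(T_{z_3})_1(T_{z_2})_2(T_{z_1})_3$ indexed by the spin configurations on the bottom and top vertical boundary edges already span $\operatorname{End}\bigl(V_\pm(z_3)\otimes V_\pm(z_2)\otimes V_\pm(z_1)\bigr)$; this is a purely combinatorial statement about metaplectic ice, to be verified by exhibiting, for each pair of basis vectors, boundary data giving a nonvanishing and (as functions of the $z_i$) linearly independent contribution. Granting it, anything commuting with $(T_{z_3})_1(T_{z_2})_2(T_{z_1})_3$ centralizes a spanning subset of $\operatorname{End}\bigl(V_\pm(z_3)\otimes V_\pm(z_2)\otimes V_\pm(z_1)\bigr)$ and is therefore scalar, so $A=cB$; evaluating both sides on the all-$+$ configuration (where every $R$-vertex of Figure~\ref{gamgamice} is of the first type and each side collapses to the same explicit product of three factors of the form $z_j^n-vz_i^n$) gives $c=1$. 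If the spanning statement proves delicate, one can instead mimic the proof of Theorem~\ref{gammaybe}: conservation of spin and of charge along each of the three strands leaves only finitely many distinct boundary configurations and decoration subcases, each a short direct check, with the complete enumeration placed in the appendix of \cite{ThisPaper}.
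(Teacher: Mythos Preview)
Your high-level strategy is the same as the paper's: both derive Theorem~\ref{firstbraid} from Theorem~\ref{gammaybe} by a train argument, pushing $R$-vertices through rows of ice. Where you diverge is in the ``sufficiency of states'' step, and this is where your proposal has a genuine gap.

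You reduce the theorem to showing that $B^{-1}A$ commutes with every block of $T_1T_2T_3$, and then assert that these blocks span $\operatorname{End}\bigl(V_\pm(z_1)\otimes V_\pm(z_2)\otimes V_\pm(z_3)\bigr)$. You correctly flag this as ``the main obstacle,'' but you do not prove it, and it is not obvious: the monodromy matrix elements are highly constrained (decorations propagate deterministically along each row, adjacent rows share vertical edges, and the $-$ spin only admits decoration~$0$), so a generic dimension count does not settle the question. Your fallback of direct case enumeration is a different proof entirely and does not rescue this one.

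The paper handles exactly this step, but not by proving spanning. Instead it attaches the two braid configurations to the left of a specific three-row ice system $\mathfrak{S}_\lambda(\mathbf{z})$ chosen via Proposition~\ref{teststate} so that, for each permutation $w$ of the left-boundary charges, there is a \emph{unique} state with nonzero Boltzmann weight $\mathbf{g}(w)\mathbf{z}^{w(\mu)}$. Pushing the braid through to the right (where all edges are forced to $-0$ and the $R$-vertices contribute a common nonzero factor) gives a single identity
\[
\sum_{w\in W} B(w)\,\mathbf{g}(w)\,\mathbf{z}^{w(\mu)}=\sum_{w\in W} C(w)\,\mathbf{g}(w)\,\mathbf{z}^{w(\mu)}.
\]
The remaining issue is to separate the terms. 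Here the paper uses a trick you do not have: since the Boltzmann weights depend only on decorations modulo~$n$, one may replace the integer charges $(a_1,a_2,a_3)$ by $(a_1+2K,a_2+K,a_3)$ with $K\in n\mathbb{Z}$ large, leaving $B(w),C(w)$ unchanged but pushing the weights $w(\mu)$ far apart so that the monomials $\mathbf{z}^{w(\mu)}$ have pairwise disjoint support. This yields $B(w)=C(w)$ termwise. The case of coinciding decorations is absorbed by the identity~\eqref{coincidencewt}, and the case where some boundary spin is $-0$ is handled by the mixed systems of Proposition~\ref{mixedice}. So the paper achieves your ``sufficiency'' goal by explicit construction plus a periodicity-and-support argument, rather than by an abstract spanning claim; this is the substantive content your sketch is missing.
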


Theorem~\ref{firstbraid} presents a parametrized
Yang-Baxter equation.
We will eventually relate this to the parametrized Yang-Baxter equation
associated with the $R$-matrix of a quantum group, namely a Drinfeld
twist of $\widehat{\mathfrak{gl}}(1|n)$.

\begin{proposition}
\label{secondbraid}
Let $\alpha,\beta,\gamma,\delta$ be decorated spins. Then the partition function of
\[\begin{tikzpicture}
\coordinate (a1) at (0,0);
\coordinate (b1) at (0,2);
\coordinate (a2) at (2,0);
\coordinate (b2) at (2,2);
\coordinate (a3) at (4,0);
\coordinate (b3) at (4,2);
\coordinate (r) at (1,1);
\coordinate (s) at (3,1);
\draw (a1) to [out=0,in=180] (b2) to [out=0,in=180] (a3);
\draw (b1) to [out=0,in=180] (a2) to [out=0,in=180] (b3);
\draw[fill=white] (a1) circle (.25);
\draw[fill=white] (b1) circle (.25);
\draw[fill=white] (a3) circle (.25);
\draw[fill=white] (b3) circle (.25);
\path[fill=white] (r) circle (.25);
\path[fill=white] (s) circle (.25);
\node at (r) {$\scriptstyle R_{z_i,z_j}$};
\node at (s) {$\scriptstyle R_{z_j,z_i}$};
\node at (a1) {$\alpha$};
\node at (b1) {$\beta$};
\node at (a3) {$\gamma$};
\node at (b3) {$\delta$};
\end{tikzpicture}\]
equals
\[\left\{\begin{array}{ll}(z_j^n-vz_i^n)(z_i^n-vz_j^n)&\text{if $\alpha=\gamma$,
  $\beta=\delta$}\\ 0 &\text{otherwise.}
\end{array}\right.\]
\end{proposition}  

As we mentioned in the introduction, we will show in
Section~\ref{connex} that Theorem~\ref{firstbraid} is related to
the intertwining integrals for principal series representations
of the metaplectic group, which were calculated in Kazhdan
and Patterson~\cite{KazhdanPatterson}. From this point of
view, Proposition~\ref{secondbraid} is related to Theorem~I.2.6
of~\cite{KazhdanPatterson}.

Because of the last result, it is \textit{almost} true that if we
modified the $R$-matrix $R_{z_i,z_j}$ by dividing by $z_j^n-vz_i^n$, the
associated quantum (super) group (which will be identified in
Section~\ref{supersection}) would be \textit{triangular} in the sense of
Drinfeld~\cite{Drinfeld}.  However because this factor $z_j^n-vz_i^n$ can
be zero, this is not quite true, and the braided category of modules is also not
triangular.

\begin{proof}[Proofs of Theorem~\ref{firstbraid} and Proposition~\ref{secondbraid}.]
The earlier version~\cite{ThisPaper} contains a proof that
Theorem~\ref{firstbraid} and Proposition~\ref{secondbraid} follow
from Theorem~\ref{gammaybe}. However Theorem~\ref{firstbraid}
may be proved more straightforwardly along the lines of
Theorem~\ref{gammaybe} by consideration of the different
cases, or deduced from Kojima~\cite{KojimaChain} equation~(2.13).
We will verify at the end of the next section that Kojima's
Yang-Baxter equation implies Theorem~\ref{firstbraid}, and
Proposition~\ref{secondbraid} is straightforward.
\end{proof}

\section{Metaplectic Ice and Supersymmetry\label{supersection}}
Perk and Schultz~\cite{PerkSchultz} found new solutions
of the Yang-Baxter equation. Meanwhile graded (supersymmetric)
Yang-Baxter equations were introduced by Bazh\-anov and
Shadrikov~\cite{BazhanovShadrikov}.  It was found by
Yamane~\cite{YamaneSuper} that the Perk-Schultz equations
were related to the $R$-matrix of the quantized enveloping
algebra of the $\mathfrak{gl}(m|n)$ Lie superalgebra in
the standard representation.  The quantized enveloping
algebra of the corresponding affine Lie superalgebra was
considered by Zhang~\cite{ZhangRBSupergroup}. A convenient
reference for us is Kojima~\cite{KojimaChain}. See also~\cite{ZhangFundamental}. 

We will explain how to relate the $R$-vertex weights of the prior section,
which depend on a fixed $n$ as in Figure~\ref{gamgamice}, 
to the $\mathfrak{gl}(1|n)$ $R$-matrix. The relationship
is rather subtle, since we will have to
perform manipulations on the Perk-Schultz $R$-matrix
in order to make the comparison. These manipulations
preserve the Yang-Baxter equation as in Theorem~\ref{firstbraid}, but (among other
things) they introduce $n$-th order Gauss sums which are crucial in
the connection to representation theory of the metaplectic group \cite{KazhdanPatterson,mice,wmd5book}.

If $V$ is the $(1|n)$-dimensional defining module of quantum $\mathfrak{gl}(1|n)$, then for every $z \in \mathbb{C}^\times$ there is an evaluation module $V_z$ of $U_{q}(\widehat{\mathfrak{gl}}(1|n))$. One can associate $R$-matrices $R_{z_i,z_j} \in \text{End}(V_{z_i} \otimes V_{z_j})$ (see \cite{KojimaChain} for more details) that satisfy a graded Yang-Baxter equation in $\operatorname{End}(V_{z_i} \otimes V_{z_j} \otimes V_{z_k})$. As noted in~\cite{KojimaChain},
we may change some signs in the $R$-matrix 
to produce a solution to the \textit{ungraded} Yang-Baxter equation; this is the $R$-matrix we wish to compare with that in Theorem~\ref{firstbraid}. A basis
of the ungraded $(1|n)$-dimensional vector space $V_z$ can
be taken to be the decorated edge spins $-0$
for the even part, and $+a$ with $a$ modulo $n$
for the odd part, of a vertex with parameter $z$. 

Referring to \cite{KojimaChain} for notation, we will
take the decorated spin $-0$ to have graded degree~0,
and the spins $+a$, where $a$ is an integer modulo $n$,
to have degree~1. Thus we are concerned with $\widehat{\mathfrak{gl}}(1|n)$.
For the sake of comparing our results to Kojima's,
the parameter $q$ in this section will be Kojima's $q$,
which will equal $\sqrt{v}$; it is not the same as
$q$ (the cardinality of the $p$-adic residue field) in the other sections of this paper.

In Figure~\ref{comparekojima} we have the Boltzmann
weights from Figure~\ref{gamgamice} divided by
$z_i^n$, compared with the corresponding $R$-matrix entries
from (2.4)-(2.7) of~\cite{KojimaChain} which we have multiplied
by the constant $1-q^2z$.

We will give two ways of modifying the $R$-matrix to obtain another $R$-matrix
that is also a solution of the Yang-Baxter equation. One method only
affects the weights in cases III, VII and VIII. The other only affects
the weights in cases II, V and VI. After these changes, we will be able
to match the Kojima Boltzmann weights up to sign, with $z=z_i/z_j$
and $q^2=v$. (Then we will have to discuss the sign.)

\begin{figure}[h]
\[ \begin{array}{|l|l|l||l|l|l|}
     \hline
     & \text{This paper} & \text{Kojima}&&\text{This paper}&\text{Kojima}\\
     \hline
\begin{array}{l}\small I.\\\vcenter{\hbox
    to.7in{\gamgam{+}{+}{+}{+}{a}{a}{a}{a}}}
  \end{array}
&\small (z_i/z_j)^n - v &\small z-q^2 &
\begin{array}{l}\small V.\\\vcenter{\hbox to.7in{\gamgam{+}{-}{+}{-}{a}{0}{a}{0}}}\end{array}
&\small v (1 - (z_i/z_j)^n) &\small q(1-z)\\
     \hline
\begin{array}{l}\small II.\\\vcenter{\hbox to.7in{\gamgam{+}{+}{+}{+}{b}{a}{b}{a}}}\end{array}
& \small g (a - b) (1 - (z_i/z_j)^n) & \small q(1-z) &
\begin{array}{l}\small VI.\\ \vcenter{\hbox to.7in{\gamgam{-}{+}{-}{+}{0}{a}{0}{a}}}\end{array}
&\small 1 - (z_i/z_j)^n &\small q(1-z)\\
     \hline
\begin{array}{l}\small III.\\\vcenter{\hbox to.7in{\gamgam{+}{+}{+}{+}{b}{a}{a}{b}}}\end{array}
& 
\begin{array}{l}
  \small (1-v) (z_i/z_j)^{n-a+b} \\ \\
  \small (1-v) (z_i/z_j)^{-a+b}
     \end{array}
& 
\small \begin{array}{l}
  \small z(q^2 - 1) \\ \small \text{if} \; a > b,\\ \\
  \small (q^2 - 1) \\ \small \text{if} \; a < b
     \end{array} &
\begin{array}{l}\small VII.\\\vcenter{\hbox to.7in{\gamgam{-}{+}{+}{-}{0}{a}{a}{0}}}\end{array}
&\small (1 - v) (z_i/z_j)^{n - a} &\small z(1-q^2)\\
     \hline
\begin{array}{l}\small IV.\\\vcenter{\hbox to.7in{\gamgam{-}{-}{-}{-}{0}{0}{0}{0}}}\end{array}
&\small 1 - v (z_i/z_j)^n &\small q^2z-1 &
\begin{array}{l}\small VIII.\\\vcenter{\hbox to.7in{\gamgam{+}{-}{-}{+}{a}{0}{0}{a}}}\end{array}
&\small (1 - v) (z_i/z_j)^a &\small 1-q^2\\
     \hline
   \end{array} \]
\caption{Left: The $R$-matrix from Figure~\ref{gamgamice},
  divided by $z_j^n$. Right: the Boltzmann weights of
  Kojima's $\widehat{\mathfrak{g}\mathfrak{l}}(1|n)$
  $R$-matrix multiplied by $1-q^2z$. Just taking the first
  three cases (and discarding any case with a decorated spin $-0$)
  gives the $\widehat{\mathfrak{g}\mathfrak{l}}(n)$ $R$-matrix.
  It is assumed that $a\not\equiv b$ mod~$n$.
}
\label{comparekojima}
\end{figure}

For each nonzero complex number $z$, let $V (z)$ be an $(n + 1)$-dimensional
vector space with basis $v_{\alpha} = v_{\alpha} (z)$, where $\alpha$ runs
through $n + 1$ ``decorated spins.'' These are the ordered pairs $+a$ with
$0 \leqslant a < n$ and $-0$.

Previously we interpreted the vertex $R_{z_i, z_j}$ as a vertex in a graph
with certain Boltzmann weights attached to it. We now reinterpret it as an
endomorphism of a vector space, as usual in the application of quantum groups
to solvable lattice models. If $\alpha, \beta, \gamma, \delta$ are decorated
spins, let $R_{\alpha, \beta}^{\gamma, \delta} (z_i, z_j)$ be the Boltzmann
weight of vertex $R_{z_i, z_j}$ with the decorated spins $\alpha, \beta,
\gamma, \delta$ arranged as follows:
\[\gamgam{\alpha}{\beta}{\gamma}{\delta}{}{}{}{}\]
We assemble these into an endomorphism $R_{z_i, z_j}$ of $V (z_1) \otimes V
(z_2)$ as follows:
\begin{equation}
  \label{boltztor}
   R_{z_i, z_j} (v_{\alpha} \otimes v_{\beta}) = \sum_{\gamma, \delta}
   R_{\alpha, \beta}^{\gamma, \delta} (z_i, z_j) \; v_{\gamma} \otimes
   v_{\delta} .
\end{equation}

\begin{lemma}
  We have
  \begin{equation}
    \label{r2d2c3p0} (R_{z_j, z_k})_{2 3} (R_{z_i, z_k})_{1 3} (R_{z_i,
    z_j})_{1 2} = (R_{z_i, z_j})_{1 2} (R_{z_i, z_k})_{1 3} (R_{z_j,
    z_k})_{2 3}
  \end{equation}
  as endomorphisms of $V (z_1) \otimes V (z_2) \otimes V (z_3)$.
\end{lemma}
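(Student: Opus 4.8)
The plan is to read \eqref{r2d2c3p0} off directly from Theorem~\ref{firstbraid}: the equation is nothing but that theorem transcribed into operator language, so no input is needed beyond Theorem~\ref{firstbraid} itself.

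The first step is to fix the dictionary between the lattice configurations in \eqref{paramybe} and compositions of the operators $R_{z_i,z_j}$. In either configuration there are three horizontal strands; one labels them $1,2,3$ so that the incidences of the strands at the three $R$-vertices are compatible with the vertex labels $R_{z_i,z_j}$. A short check shows that this forces the boundary spins $\alpha,\beta,\gamma$ (in order, from the bottom) to lie on the strands carrying $z_1,z_2,z_3$ respectively, and turns each $R$-vertex labelled $R_{z_i,z_j}$ into an occurrence of $(R_{z_i,z_j})_{ij}$ acting on the two tensor slots carrying $z_i$ and $z_j$; the tensor slots are \emph{not} permuted by the vertex, because in Figure~\ref{gamgamice} the spin at the lower-left of a vertex and the spin at its upper-right lie on a single strand, which is exactly the convention in the formula $R_{z_1,z_2}(v_\alpha\otimes v_\beta)=\sum_{\gamma,\delta}R^{\gamma,\delta}_{\alpha,\beta}(z_1,z_2)\,v_\gamma\otimes v_\delta$ used to define $R_{z_1,z_2}$. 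Summation over the decorated spins on the internal edges is matrix multiplication, and scanning a configuration from its left boundary to its right boundary records the order of composition. Carrying this out, the partition function of the left-hand configuration in \eqref{paramybe} with left boundary $v_\alpha\otimes v_\beta\otimes v_\gamma$ and right boundary $v_\delta\otimes v_\epsilon\otimes v_\phi$ equals the $(v_\delta\otimes v_\epsilon\otimes v_\phi,\,v_\alpha\otimes v_\beta\otimes v_\gamma)$ matrix entry of $(R_{z_2,z_3})_{23}(R_{z_1,z_3})_{13}(R_{z_1,z_2})_{12}$, while the right-hand configuration gives the same matrix entry of $(R_{z_1,z_2})_{12}(R_{z_1,z_3})_{13}(R_{z_2,z_3})_{23}$.

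The remaining point is that the basis of $V(z)$ is indexed exactly by the visible decorated spins (the pairs $+a$ with $0\leqslant a<n$ together with $-0$), and that non-visible decorated spins contribute zero Boltzmann weight and do not occur; thus ``for all visible decorated boundary spins $\alpha,\beta,\gamma,\delta,\epsilon,\phi$'' in Theorem~\ref{firstbraid} is precisely ``for all pairs of basis vectors of $V(z_1)\otimes V(z_2)\otimes V(z_3)$.'' Theorem~\ref{firstbraid} then says the two matrices above have equal entries throughout this basis, which is \eqref{r2d2c3p0}.

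I expect the only real obstacle to be bookkeeping, concentrated in the dictionary: one must keep straight which tensor slot each strand occupies along the whole configuration, verify that ``scan left to right $=$ compose in that order'' together with the non-permuting convention of Figure~\ref{gamgamice} produces exactly the two operator words appearing in \eqref{r2d2c3p0} (and, crucially, that no flip $\tau$ creeps in, so that one obtains the $R$-form and not the braided form $\check R\,\check R\,\check R=\check R\,\check R\,\check R$ with $\check R=\tau R$), and confirm that the strand-to-parameter labelling agrees with the convention by which the entries $R^{\gamma,\delta}_{\alpha,\beta}(z_1,z_2)$ were defined. Once these conventions are nailed down the proof is immediate.
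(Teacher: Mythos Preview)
Your proposal is correct and follows essentially the same approach as the paper's own proof: both compute the matrix entries of each side of \eqref{r2d2c3p0} in the basis $\{v_\alpha\otimes v_\beta\otimes v_\gamma\}$, identify these entries with the two partition functions in Theorem~\ref{firstbraid}, and conclude. The paper's version is terser—it simply writes out the sum $\sum_{\mu,\nu,\sigma} R_{\alpha,\beta}^{\mu,\sigma}(z_1,z_2)\,R_{\mu,\gamma}^{\delta,\nu}(z_1,z_3)\,R_{\sigma,\nu}^{\varepsilon,\phi}(z_2,z_3)$ and recognizes it as the left-hand partition function—whereas you spend more care on the strand-to-slot dictionary and the absence of flips, but the substance is identical.
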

Here the notation is (as usual in quantum group theory) that $X_{i j}$ where
$1 \leqslant i < j \leqslant 3$ means a matrix $X$ acting on the $i,
j$ components in $V (z_1) \otimes V (z_2) \otimes V (z_3)$ with the identity
acting on the third component.

\begin{proof}
  We apply the left-hand side of (\ref{r2d2c3p0}) to
  $v_{\alpha} \otimes v_{\beta} \otimes v_{\gamma}$ and extract the coefficient of $v_{\delta}
  \otimes v_{\varepsilon} \otimes v_{\phi}$. This is found to be
  \[ \sum_{\mu, \nu, \sigma} R_{\alpha, \beta}^{\mu, \sigma} (z_i, z_j)
     R_{\mu, \gamma}^{\delta, \nu} (z_i, z_k) R_{\sigma, \nu}^{\varepsilon,\phi} (z_j, z_k), \]
  which is the partition function of the first system in Theorem~\ref{firstbraid}. The
  same calculation applied to the right hand side of (\ref{r2d2c3p0}) gives
  the partition function of the second system in Theorem~\ref{firstbraid}. So
  they are equal.
\end{proof}

We will now describe two operations that one may perform on the
Boltzmann weights that do not affect the validity of the Yang-Baxter
equation.

\subsection*{Change of basis}
We may change basis in $V (z)$. Let $f (\alpha, z)$ be a function of a
decorated spin $\alpha$ and a complex number $z$. Let $u_{\alpha} = f (\alpha,
z) v_{\alpha}$ for $v_{\alpha} \in V (z)$. Then
\[ R_{z_i, z_j} (u_{\alpha} \otimes u_{\beta}) = \sum_{\gamma, \delta}
   \hat{R}_{\alpha, \beta}^{\gamma, \delta} (z_i, z_j) \; u_{\gamma} \otimes
   u_{\delta} \]
where
\begin{equation}\label{rhatcomp}
\hat{R}_{\alpha, \beta}^{\gamma, \delta} = \frac{f (\alpha, z_i) f (\beta,
   z_j)}{f (\gamma, z_i) f (\delta, z_j)}\,R_{\alpha,\beta}^{\gamma\delta}.
\end{equation}
Note that replacing $R$ by $\hat R$ only affects the weights in cases III, VII, and
VIII in Figure~\ref{comparekojima}. 

Let us translate this into the language of Boltzmann weights.
At the moment we are only concerned with Theorem~\ref{firstbraid}.
Later in Section~\ref{connex} we will apply this technique to the first
Yang-Baxter equation in Theorem~\ref{gammaybe}. Thus we note the effect
on the weights for both types of vertices. Taking the Boltzmann weights
from Figures~\ref{gamgamice} and \ref{gammaice}, with $z_i$ and $z_j$
as in those figures, the weights of
\[\vcenter{\hbox to .7in{\gamgam{\alpha}{\beta}{\beta}{\alpha}{}{}{}{}}},\qquad\qquad
\vcenter{\hbox to .7in{\gammaice{\alpha}{\pm}{\beta}{\pm}{}{}{}{}}}\]
will respectively be multiplied by
\begin{equation}
\label{frecipe}
\frac{f (\alpha, z_i) f (\beta, z_j)}{f (\gamma, z_i) f (\delta, z_j)},\qquad\qquad
\frac{f(\alpha,z_i)}{f(\beta,z_i)}.
\end{equation}
The first statement is a paraphrase of (\ref{rhatcomp}), and the second
is checked the same way.

Returning to the comparison with Kojima's weights, we take 
\[f(\alpha,z)=\left\{\begin{array}{ll}z^a,\qquad& \alpha=+a,\\
1&\alpha=-0. \end{array}\right.
\]
This puts our $R$-matrix into agreement with Kojima in cases III, VII, and
VIII but has no effect on the other cases. 
The modification in this subsection did not fundamentally change
the $R$-matrix, or the quantum group associated to it. We simply made a change of basis in the vector space on which it acts.

\subsection*{\label{twisting}Twisting}
In this subsection we will consider a more fundamental
change of the $R$-matrix which does not affect the validity of the Yang-Baxter equation. This procedure is called
\textit{Drinfeld twisting} \cite{DrinfeldQuasiHopf}. The quantum group associated to the twisted $R$-matrix is not the original quantum group, as the Drinfeld twisting procedure modifies the
comultiplication and universal $R$-matrix of a quasitriangular Hopf algebra. See Chari and Pressley~\cite{ChariPressley} Section~4.2.E for more details. In
Reshetikhin~\cite{ReshetikhinMultiparameter} Section~3, Drinfeld twisting is
used to obtain multiparameter deformations of $U_q(\mathfrak{sl}(n))$.
We explain in~\cite{BBBF}, Section 4 (at least for the $\mathfrak{gl}(n)$ part) how the Drinfeld twist on the quantum group produces the desired change to the $R$-matrix that we present below. 

Notice that in
Figure~\ref{gamgamice}, if we have a nonzero weight for the vertex of
form
\[\vcenter{\hbox to .7in{\gamgam{\pm}{\pm}{\pm}{\pm}{a}{b}{c}{d}}}\;,\]
then either $a=c$ and $b=d$ or $a=d$ and $b=c$.

Now let us consider a modification of the Boltzmann weights in case~II (i.e., $a=c$ and $b=d$).
We will multiply this weight by a function $\phi(a,b)$ of the decorations $a,b$ that
has the following properties. First, it is independent of $z_i$ and $z_j$.
Second, $\phi(a,b)\,\phi(b,a)=1$.

\begin{proposition}
If $\widetilde{R}$ is the $R$-matrix with this modification of the weights
in case~II, then $\widetilde{R}$ also satisfies the same Yang-Baxter
equation that $R$ does (Theorem~\ref{firstbraid}).
\end{proposition}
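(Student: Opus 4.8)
The plan is to recognize this modification as an \emph{abelian Drinfeld twist} of the $R$-matrix; once that is done, invariance of the Yang--Baxter equation becomes an instance of a general principle rather than a new case-by-case check.

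First I would pass from the partition-function form of Theorem~\ref{firstbraid} to its operator form: by the Lemma preceding~\eqref{r2d2c3p0}, the content of Theorem~\ref{firstbraid} is exactly the identity $(R_{z_2,z_3})_{23}(R_{z_1,z_3})_{13}(R_{z_1,z_2})_{12}=(R_{z_1,z_2})_{12}(R_{z_1,z_3})_{13}(R_{z_2,z_3})_{23}$ on $V(z_1)\otimes V(z_2)\otimes V(z_3)$, so it suffices to establish this with $R$ replaced by $\widetilde{R}$. Next I would build a diagonal operator $\mathcal{F}\in\operatorname{End}(V\otimes V)$ --- living on the tensor square, not one induced from a change of basis in $V$ alone --- by setting $\mathcal{F}(v_{+a}\otimes v_{+b})=f(a,b)\,v_{+a}\otimes v_{+b}$ and letting $\mathcal{F}$ act as the identity on every basis tensor involving $v_{-0}$, where $f$ is any function with $f(a,b)/f(b,a)=\phi(a,b)$; such an $f$ exists \emph{precisely because} $\phi(a,b)\phi(b,a)=1$ (e.g. $f(a,b)=\phi(a,b)$ for $a<b$ and $f(a,b)=1$ otherwise). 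Writing $\mathcal{F}_{21}=\tau\mathcal{F}\tau$, one checks in one line that $(\mathcal{F}_{21}R_{z_1,z_2}\mathcal{F}^{-1})^{\gamma,\delta}_{\alpha,\beta}=\bigl(f(\delta,\gamma)/f(\alpha,\beta)\bigr)R^{\gamma,\delta}_{\alpha,\beta}$, and here is where the dichotomy recorded just before the statement enters: since a nonzero weight of Figure~\ref{gamgamice} has either $(\gamma,\delta)=(\alpha,\beta)$ or $(\gamma,\delta)=(\beta,\alpha)$, this ratio is $1$ in every case except case~II, in which $(\alpha,\beta,\gamma,\delta)=(+b,+a,+b,+a)$ and the ratio equals $f(a,b)/f(b,a)=\phi(a,b)$. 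Hence $\widetilde{R}_{z_1,z_2}=\mathcal{F}_{21}\,R_{z_1,z_2}\,\mathcal{F}^{-1}$ on the nose.

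It then remains only to observe that conjugating the $R$-matrix of a parametrized Yang--Baxter equation by a fixed operator $\mathcal{F}$ of this shape --- diagonal in the weight basis, i.e. lying in $\exp(\mathfrak{h}\otimes\mathfrak{h})$ for $\mathfrak{h}$ the Cartan of $\mathfrak{gl}(n+1)$ once one writes $f(a,b)=e^{t_{ab}}$ --- preserves the equation. This is the standard fact that abelian (Reshetikhin-type) Drinfeld twists respect Yang--Baxter; see Reshetikhin~\cite{ReshetikhinMultiparameter}~\S3, Chari--Pressley~\cite{ChariPressley}~\S4.2.E, Majid~\cite{MajidFoundations}~Thm.~2.3.4, or the fuller treatment of this twisting in~\cite{BBBF}. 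If a self-contained argument is wanted, one substitutes $\widetilde{R}=\mathcal{F}_{21}R\mathcal{F}^{-1}$ into~\eqref{r2d2c3p0} and slides the diagonal factors through: the key input is that every vertex of Figure~\ref{gamgamice} preserves the \emph{multiset} of decorated spins on the pair of edges it acts on, so each $R_{z_i,z_j}$ commutes with a diagonal operator on an overlapping pair of tensor factors whenever that operator's eigenvalues depend only on the relevant multiset, and the leftover $\mathcal{F}$'s then telescope to reproduce the untwisted identity.

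I expect this last telescoping step to be the only real obstacle; it is what carries the argument beyond bookkeeping, and it is precisely the place where weight-preservation of $R$ together with the abelian-twist mechanism must be invoked. I would also note a purely combinatorial alternative: modifying case~II multiplies the Boltzmann weight of each state of the systems in Theorem~\ref{firstbraid} by a product of $\phi$-factors over its case~II vertices, and although different states contributing to a fixed boundary may acquire different such products, the relation $\phi(a,b)\phi(b,a)=1$ forces the two partition functions to remain equal; making this precise, however, means re-running the argument of Section~\ref{tfproofs} --- in particular checking compatibility of the modification with the coincidence identity~\eqref{coincidencewt}, which requires $\phi(a,b)=1$ whenever $a\equiv b\pmod n$ --- and is less transparent than the twisting argument above.
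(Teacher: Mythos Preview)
Your identification $\widetilde{R}=\mathcal{F}_{21}R\mathcal{F}^{-1}$ is correct, and the one-line check that this modifies only case~II is fine. The gap is in the step from there to the Yang--Baxter equation. The references you cite (Reshetikhin, Chari--Pressley, Majid) establish invariance under twists satisfying a \emph{cocycle} condition; for an abelian twist this forces $f$ to be bimultiplicative on the weight lattice, which is strictly stronger than merely $f(a,b)/f(b,a)=\phi(a,b)$. Your self-contained ``sliding'' argument does not work as written: the operator you must commute past $R_{ij}$ is of the form $\mathcal{F}_{jk}^{-1}\mathcal{F}_{ki}$, whose eigenvalue depends on the \emph{individual} labels in slots $i$ and $j$, not on the multiset $\{c_i,c_j\}$, so multiset preservation of $R_{ij}$ gives no commutation. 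Concretely, with your choice $f(a,b)=\phi(a,b)$ for $a<b$ and $1$ otherwise, the required identity $f(c_3,c_1)f(c_2,c_1)=f(c_1,c_3)f(c_2,c_3)$ (needed to move $\mathcal{F}_{23}^{-1}\mathcal{F}_{31}$ past $R_{13}$) becomes $1=\phi(1,3)\phi(2,3)$ at $(c_1,c_2,c_3)=(1,2,3)$, which fails for generic $\phi$.

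The paper's proof is the combinatorial route you list as an alternative, but it is a few lines, not a rerun of Section~\ref{tfproofs}. The observation is that for a fixed boundary the total $\phi$-multiplier on any contributing state is determined by which pairs of decorations have reversed order between input and output: such a pair contributes exactly one case~II factor on each side (the same $\phi(a,b)$), while a pair in the same order contributes either no factor or $\phi(a,b)\phi(b,a)=1$. This uses only the dichotomy you already noted and the hypothesis on $\phi$. Your remark that one must impose $\phi(a,b)=1$ when $a\equiv b\pmod n$ to handle~\eqref{coincidencewt} is misplaced here: the decorations are residues mod~$n$, so case~II already has $a\neq b$, and~\eqref{coincidencewt} plays no role in this proposition.
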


\begin{proof}
From the Boltzmann weights in Figure~\ref{comparekojima}, we see
that the decorated spins of the two edges to the right of the vertex will have
the same decorations as the two edges to the left of the vertex, in some order.
From the form of $R_{z_i,z_j}$ it is clear that if either partition
function is nonzero, the decorated spins $\delta$, $\epsilon$ and $\phi$
must be the same as $\alpha$, $\beta$ and $\gamma$ in some order.
From this ordering, we may infer the number of case~II vertices,
and (with an exception to be explained below) it will be the same for both
partition functions. That is, if $+a$ and $+b$ occur on the left in the
opposite order that they do on the right, then a case~II crossing must
occur somewhere on a vertex between the four edges. And this will be
true on both sides of the equation, so multiplying the case~II Boltzmann
weight by $\phi(a,b)$ will have the same effect on both sides of the
equation.

The exception is that if two weights appear in the same order
on the left and right, there may be two case~II vertices or none between them.
Thus suppose that $\alpha=\phi=+a$ and $\beta=\epsilon=+b$. 
Then in the first partition function in Proposition~\ref{secondbraid}
we may have $R_{z_i,z_j}$ and $R_{z_j,z_k}$ either both in case~II or
both in case~III. However if they are both in case~II, the factor that
we have to multiply is $\phi(a,b)\,\phi(b,a)$, which equals 1 by assumption.
\end{proof}

We may use this method of twisting in order to remove the
$g(a-b)$ in case~II, and replace them by $q$, since in
this case $a\not\equiv b$ mod $n$, so $g(a-b)\,g(b-a)=v=q^2$.
We may also adjust the weights in cases V and VI so that in both
cases the coefficient agrees with Kojima's weights.

\subsection*{\label{sign}Sign}
Using the two methods available to us, we see that we can adjust
the Boltzmann weights to agree with Kojima's, up to sign. We must
now discuss the sign. We have agreement for all signs except case~IV.
As Kojima notes (below his equation (2.12)) his $R$-matrix, being
supersymmetric, satisfies a graded Yang-Baxter equation. As he
points out, an ungraded Yang-Baxter equation may be obtained
by changing the sign when all edges are odd-graded. For us,
this would mean changing the sign in cases~I, II and~III. However
it works equally well to change the sign in the case where all
edges are even-graded, that is, in case~IV.

\medskip

In conclusion, putting together the results of all of the above subsections, the supersymmetric Yang-Baxter equation in
Kojima~\cite{KojimaChain} is equivalent to our
Theorem~\ref{firstbraid}.

\section{Intertwining integrals as $R$-matrices\label{connex}}

In this section, we will review results of Kazhdan and
Patterson~\cite{KazhdanPatterson}, Chinta and Offen~\cite{ChintaOffen}
and McNamara~\cite{McNamaraPrincipal,McNamaraCS} concerning
the scattering matrix of the intertwining operators of the
principal series representations on their Whittaker models.
Then we return to the $R$-matrices, using modified Boltzmann
weights that are suited to make a connection with the
notation of \cite{McNamaraCS}, which will be our primary reference.
Finally we will prove Theorem~\ref{commdiagwithintertwiner}.

Let $F$ be a non-archimedean local field with ring of integers $\mathfrak{o}$ and a choice of local uniformizer $\varpi$. Let $q$ be the cardinality of the residue field $\mathfrak{o} / \varpi \mathfrak{o}$. Let $n$ be a fixed positive integer. We assume that $q \equiv 1 \; (\text{mod } 2n)$ so that $F$ contains the $2n$-th roots of unity.
Let $\mu_n$ denote the group of $n$-th roots of unity in $F$ and fix an embedding $\mu_n \longrightarrow \mathbb{C}^\times$.

Let $G:=\GL(r,F)$ and let $T$ be the subgroup of diagonal matrices. We begin by constructing 
a metaplectic $n$-fold cover of $G$, denoted $\tilde{G}^{(n)}$ or just $\tilde{G}$ when the degree of the cover is understood.
Recall that $\tilde{G}$ is constructed as a central extension of $G$ by $\mu_n$:
$$ 1 \longrightarrow \mu_n \longrightarrow \tilde{G} \stackrel{p}{\longrightarrow} G \longrightarrow 1. $$
Thus as a set, $\tilde{G} \simeq G \times \mu_n$, but the multiplication in $\tilde{G}$ is dictated by a choice of cocycle
$\sigma$ for $H^2(G, \mu_n)$. One may construct the cocycle explicitly,
as in Kubota~\cite{Kubota}, Matsumoto \cite{Matsumoto}, Kazhdan and
Patterson~\cite{KazhdanPatterson} and Banks-Levi-Sepanski~\cite{BanksLevySepanski},
or realize the central extension as coming from an extension of $K_2(F)$
constructed by Brylinski-Deligne \cite{BrylinskiDeligne}. For the applications at
hand, we need only a few facts about the multiplication on $\tilde{T} = p^{-1}(T)$,
the inverse image of a maximal split torus $T$ in $G$, and the splitting
properties of some familiar subgroups.

Our cocycle $\sigma$ is chosen so that its restriction to $T\times T\longrightarrow \mu_n$ is given on any $\mathbf{x}, \mathbf{y}$ in $T$ explicitly by
\begin{equation}
  \sigma(\mathbf{x}, \mathbf{y}) = \sigma \left( \left( \begin{array}{ccc} x_1 & & \\ & \ddots & \\  & & x_{r} \end{array} \right),
  \left( \begin{array}{ccc} y_1 & & \\ & \ddots & \\  & & y_{r} \end{array}
  \right) \right) =
  (\det(\mathbf{x}), \det(\mathbf{y}))_{2n} \prod_{i > j} (x_i, y_j)^{-1},
  \label{toruscocycle}
\end{equation}
where $( \cdot, \cdot ) \, : F^\times \times F^\times \longrightarrow \mu_n$ is the
$n$-th power Hilbert symbol and $( \cdot, \cdot )_{2n}$ is the $2n$-th power
Hilbert symbol, so $(x,y)=(x,y)_{2n}^2$.
General properties of the Hilbert symbol may be found in \cite{neukirch} noting
that the symbol there is the inverse of ours; one property we use frequently is
that $(x,x)=1$ for any element $x \in F^\times$, since $F$ contains the $2n$-th
roots of unity.

Let $\Lambda=X_\ast(T)$ denote the group of rational cocharacters of $T$.  The
cocycle $\sigma$ in \eqref{toruscocycle} is the inverse of the one appearing
on p.~39~of \cite{KazhdanPatterson}. A short computation shows that the
commutator of any pair of elements $\tilde{\mathbf{x}}, \tilde{\mathbf{y}}$ in
$\tilde{T}$ projecting to $\mathbf{x}$ and $\mathbf{y}$, respectively, in $T$ is
\begin{equation}
  [\tilde{\mathbf{x}}, \tilde{\mathbf{y}}] = \prod_{i=1}^{r} (x_i, y_i). \label{toruscommutator}
\end{equation}
In particular if $x, y \in F^\times$ and $\lambda, \mu$ are elements of $X_\ast(T)$, let $\widetilde{\lambda(x)}, \widetilde{\mu(y)} \in \tilde{T}$ map to $x^\lambda$ and $y^\mu$, respectively, under the projection $p$ to $T$. Then according to \eqref{toruscommutator},
\begin{equation}
  [ \widetilde{\lambda(x)}, \widetilde{\mu(y)} ] = (x,y)^{\langle \lambda, \mu \rangle}, \label{torusfeltcommutator}
\end{equation}
where $\langle \cdot, \cdot \rangle$ denotes the usual dot product on
$X_\ast(T) \simeq \mathbb{Z}^{r}$. 

In order to make use of results in \cite{McNamaraCS}, we must connect this
explicit construction to the one used there. In \cite{McNamaraCS} the
construction of $\tilde{G}$ is obtained by first constructing the extension of
$G(F)$ by $K_2(F)$ using a $W$-invariant quadratic form $Q$, and then using a
push forward from $K_2(F)$ to the residue field, containing $\mu_n$. The
calculation in \eqref{torusfeltcommutator} implies that the bilinear form
$B(\lambda, \mu) := Q(\lambda+\mu) - Q(\lambda) - Q(\mu)$ for our extension,
as described in Equation~(2.1) of \cite{McNamaraCS}, is given by the dot
product. If $\alpha$ is a (co)root then $Q(\alpha)=1$. 

Finally, we record that the cocycle splits over any unipotent subgroup and
over the maximal compact subgroup $K = \GL(r,\mathfrak{o})$ has a splitting in
$\tilde{G}$. The splitting over the maximal unipotent is clear from the
description of the cocycle in~\cite{KazhdanPatterson} and the splitting
over $K$ is their Proposition~0.1.2. By abuse of notation, we will
denote the image of $K$ in $\tilde{G}$ also as~$K$.

Let $T(\mathfrak{o})=K\cap T$, and let $\tilde{T}(\mathfrak{o})$ be
the preimage of $T(\mathfrak{o})$ in $\widetilde{G}$. Let
$H$ be the centralizer of $\tilde{T}(\mathfrak{o})$ in $\tilde{T}$.
It consists of elements in $\tilde{T}$ whose projection to the torus
$\mathbf{t} = (t_1, \ldots, t_{r}) \in T \simeq (F^\times)^r$ has
$\text{ord}_\varpi \left( t_j \right) \equiv 0 \; (n)$ for $j = 1,\ldots, r$.
The subgroup $H$ is abelian. Thus we may identify
$\tilde{T} / \mu_n \tilde{T}(\mathfrak{o})$ and $H / \mu_n
\tilde{T}(\mathfrak{o})$ with lattices $\Lambda$ and $n \Lambda$,
respectively. In particular $\Lambda$ is isomorphic to the cocharacter lattice
$X_\ast(T)$ of $T$.  The map $\lambda \mapsto \varpi^\lambda$ induces an
isomorphism from $X_\ast(T)$ to $T/T(\mathfrak{o})$. Let
$\mathbf{s}:G\to\tilde G$ denote the standard section. By abuse of notation we
will also denote by $\varpi^\lambda$ the image of $\varpi^\lambda$ under
$\mathbf{s}$. Let $\rho=(r-1,\ldots,2,1,0)$ and let $\Gamma$ be the set of
$\nu\in\Lambda=\mathbb{Z}^r=X_\ast(T)$ such that
\begin{equation}
  \label{prescribedreps}
  \nu - \rho = (c_1,\ldots,c_r),
  \quad \text{with $c_i \in \{0, \ldots, n-1 \}$ for all $i$.} 
\end{equation}
This is a set of coset representatives in $\Lambda=\mathbb{Z}^r=X_\ast(T)$ for
$\Lambda$ modulo $n\Lambda$. Then $\{\varpi^\lambda|\lambda\in\Gamma\}$ are a
set of coset representatives for $\tilde{T} / H$.

Next we recall the construction of the genuine unramified principal series on $\tilde{G}$.
(A representation $\pi$ of $\tilde{G}$ or any subgroup containing $\mu_n$
is called \textit{genuine} if $\pi(\varepsilon g)=\varepsilon\pi(g)$
for $\varepsilon\in\mu_n$, where we are using the fixed embedding of $\mu_n\subset F^\times$
into $\C^\times$.) First we construct geniune irreducible representations
of $\tilde{T}$. Let $\chi$ be a genuine character of $H$ that is trivial
on $\tilde{T}\cap K$; the induced representation $i(\chi)$ of such a character
to $H$ will be irreducible.

Now we may parabolically induce $i(\chi)$ to $\tilde{G}$. This is done by
first inflating the representation from $\tilde{T}$ to $\tilde{B}$, the
inverse image of the standard Borel subgroup $B \supset T$ in $G$ and then
inducing to obtain $I(\chi) :=
\text{Ind}_{\tilde{B}}^{\tilde{G}}(i(\chi))$. Explicitly $I(\chi)$ is the
space of locally constant functions $f : \tilde{G} \longrightarrow i(\chi)$
such that
$$ f(bg) = \delta^{1/2} \chi (b) f(g) \quad \text{for all $g \in \tilde{G}, b \in \tilde{B}$}, $$
where $\delta$ denotes the modular quasicharacter of $B$. Thus $I(\chi)$ is a
$\tilde{G}$-module under the action of right translation.
Let $\phi_K := \phi_K^\chi$ denote any of the $i(\chi)$-valued functions in
the one-dimensional space of $K$-fixed vectors in $I(\chi)$; our results will
be independent of this choice.

The characters $\chi$ of $H$ that are trivial on $\tilde{T}\cap K$
may be parametrized by elements $\mathbf{z}\in\widehat{T}$,
which is the group of diagonal elements of $\GL(r,\C)$. Every
element of $H$ may be written $\varepsilon \mathbf{s}(t)$ with $\varepsilon\in\mu_n$
and $t=\operatorname{diag}(t_1,\cdots,t_r)\in T$ such that
each $\operatorname{ord}(t_i)$ is a multiple of $n$. We
may then define
\[\chi_{\mathbf{z}}(\varepsilon \mathbf{s}(t))=\varepsilon \prod_{i=1}^r z_i^{\operatorname{ord}(t_i)}.\]
We will denote the corresponding principal series representation $\pi_{\mathbf{z}}=I(\chi_{\mathbf{z}})$.
It does not depend uniquely on $\mathbf{z}$ since if $\mathbf{z}^n=(\mathbf{z}')^n$
then~$\pi_{\mathbf{z}}\cong\pi_{\mathbf{z}'}$.

We will assume that $I(\chi_\mathbf{z})$ is irreducible. For this it is
necessary and sufficient to assume that $\mathbf{z}^{n\alpha}\neq q^{\pm1}$
for all roots $\alpha$. This also guarantees that the rational functions that
appear in the sequel do not have poles.

Let $U$ be the subgroup of upper unitriangular matrices in $G$, which is the unipotent
radical of $B$, the positive Borel subgroup. The Matsumoto construction supplies
a splitting of the metaplectic cover over $U$, so by abuse of notation we may
regard $U$ as a subgroup of $\tilde{G}$. In particular, for any positive root
$\alpha \in \Phi^+$, we may regard the one-parameter root subgroup $U_\alpha$ corresponding
to $\alpha$ as a subgroup of $\tilde{G}$.

To any element $w \in W$, the Weyl group, we may define the unipotent subgroup $U_w$ by
$$ U_w := \prod_{\alpha \in \Phi^+, \, w(\alpha) \in \Phi^-} U_\alpha. $$
Then define the intertwining operator $\mathcal{A}_w : I(\chi) \rightarrow I( {}^w \chi)$ by
\begin{equation} \mathcal{A}_w (f) (g) := \int_{U_w} f(w^{-1} u g) \, du \label{intertwinerunnorm} \end{equation}
whenever the above integral is absolutely convergent, and by the usual meromorphic continuation in general. 
(By abuse of notation we are using the same letter $w$ for the Weyl group and for a representative
in~$K$.)

The representation $\pi_{\mathbf{z}}$ contains a $K$-fixed vector
$\phi_K$, unique up to constant multiple. We may choose these so that
$$ \mathcal{A}_w \phi_K^{\mathbf{z}} = c_w(\chi) \phi_K^{w\mathbf{z}} $$
where for any simple reflections $s = s_\alpha$ and any $w$ such that the length function $\ell(s_\alpha w) = \ell(w) + 1$,
$$ c_s(\chi) = \frac{1 - q^{-1} \mathbf{z}^{n \alpha}}{1 - \mathbf{z}^{n \alpha}}, \quad \text{and} \quad c_{sw}(\chi) = c_s(\chi^w) c_w(\chi). $$
Let $\bar{\mathcal{A}}_w$ denote the normalized intertwiner:
\begin{equation} \bar{\mathcal{A}_w} := c_w(\chi)^{-1} \mathcal{A}_w. \label{normalizedint} \end{equation}

Let $\psi$ be a character of $U$ such that if $i_\alpha$ is the embedding $\SL_2\to \GL_n$ along
the simple root $\alpha$, then the additive character
$x\mapsto i_\alpha\!\left(\begin{smallmatrix}1&x\\&1\end{smallmatrix}\right)$
of $F$ is trivial on $\mathfrak{o}$ but no larger fractional ideal.
A \textit{Whittaker functional} on a representation $(\pi, V)$ of $\tilde{G}$
is a linear functional $W^{\pi}$ for which
\[ W^{\pi}  (\pi (u) v) = \psi (u) W^{\pi} (v)  \quad \text{for all $u \in U$
   and } v \in V. \]
As stated in Section~6 of {\cite{McNamaraCS}}, the dimension of the space of
Whittaker functionals for the principal series $I (\chi)$ is equal to the
cardinality $n^r$ of $\tilde{T} / H$. Let $\mathbf{W}^{\chi}$ denote the $i
(\chi)$-valued Whittaker functional on $I (\chi)$ defined by
\begin{equation}
  \label{ichiwhittaker}
  \mathbf{W}^{\chi} (\phi) := \int_{U^-} \phi (uw_0) \overline{\psi (u)}\, du
  \; : I (\chi) \longrightarrow i (\chi) .
\end{equation}
(We denote this $\mathbf{W}^{\mathbf{z}}$ when $\chi=\chi_{\mathbf{z}}$.)
Then there is an isomorphism between the linear dual $i (\chi)^{\ast}$ and the
space Whittaker functionals to $\mathbb{C}$ on $I (\chi)$ given by
\begin{equation}
  \mathcal{L} \longmapsto \mathcal{L} \circ W^{\chi}, \quad \text{for
  $\mathcal{L}$ in $i (\chi)^{\ast}$.} \label{functionalcorresp}
\end{equation}

Let us describe a particular basis of $i (\chi)^{\ast}$ used in
{\cite{McNamaraCS}} for the computation of the spherical function under the
Whittaker functional. Let $v_0 := \phi_K (1)$, an element of $i (\chi)$.
Let $\theta_\chi$ denote the representation of $\tilde{T}$ on $i (\chi)$
(denoted $\pi_\chi$ in~\cite{McNamaraCS}). Then
$\{ \theta_{\chi} (\varpi^{\gamma})v_0 | \gamma \in \Gamma \}$ is a basis for
$i (\chi)$. Let $\{ \mathcal{L}_{\gamma}^{(\chi)} \}$ denote
the dual basis of $i (\chi)^{\ast}$. If $\mu \in \Lambda$ write $\mu = \beta +
\gamma$ with $\gamma \in \Gamma$ and $\beta \in n \Lambda$. Then
\begin{equation}
  \mathcal{L}_{\nu}^{(\chi)}  (\theta_{\chi} (\varpi^{\mu}) v_0) =
  \left\{ \begin{array}{ll}
    \chi(\varpi^{\beta}) & \text{if $\nu = \gamma$,}\\
    0 & \text{otherwise.}
  \end{array} \right. \label{lambdafunctional}
\end{equation}
Thus we obtain a basis of the space of Whittaker functionals on $I (\chi)$,
denoted $W_{\gamma}^{\chi} =\mathcal{L}_{\gamma}^{\chi} \circ \mathbf{W}^{\chi}$ using
the isomorphism (\ref{functionalcorresp}). We will denote
$W_{\gamma}^{\mathbf{z}} = W_{\gamma}^{\chi}$ if $\chi =\chi_{\mathbf{z}}$,
or as simply~$W_\gamma$.

The spherical Whittaker function $W_{\gamma}^{\mathbf{z}}  (\pi
(\varpi^{\lambda}) \phi_K)$ vanishes unless the weight $\lambda$ is dominant.
One approach to studying them, going back to Casselman and Shalika (for linear
groups) and Kazhdan and Patterson for metaplectic covers, is to exploit the
fact that $\mathcal{W}^{^w \chi} \circ \overline{\mathcal{A}_w}$ is an $i
(\chi)$-valued Whittaker functional for $I (\chi)$. This is the approach that
was taken by Chinta and Offen {\cite{ChintaOffen}} and McNamara
{\cite{McNamaraCS}}. Thus we expand
\begin{equation}
  \label{whitequality}
  W_{\mu}^{w\mathbf{z}} \circ \overline{\mathcal{A}}_w = \sum_{\nu \in
    \Gamma} \tau_{\mu, \nu} W_{\nu}^{\mathbf{z}}
\end{equation}
for some rational functions
$\tau_{\mu, \nu} = \tau^{(w)}_{\mu, \nu}(\mathbf{z}^n)$.
It suffices to understand these structure constants on simple
reflections $w = s_{\alpha}$. These were computed for metaplectic covers of
$\GL (r)$ by Kazhdan and Patterson, and we discuss their calculation
following Theorem 13.1 in {\cite{McNamaraCS}}.

We now introduce the Gauss sums $g(a)$, which depend on $a$ modulo $n$ and satisfy
the conditions $g(0)=-v$, while $g(a)\,g(n-a)=v$ if $n$ does not divide $a$, with
$v=q^{-1}$. These are given by the formula
\begin{equation}
  \label{gausssumdef}
  g(a)=\frac{1}{q}\sum_{t \in(\mathfrak{o}/(\varpi))^\times}
  ({\varpi},t)^a\psi\!\left(\frac{t}{\varpi}\right).
\end{equation}

\begin{proposition}[Kazhdan-Patterson, \cite{KazhdanPatterson}, Lemma I.3.3] \label{KPlemma}
Let $s=s_{\alpha}$ be a simple reflection and let $\mu,\nu\in\Gamma$.
The structure constants $\tau_{\nu,\mu} := \tau_{\nu, \mu}^{(w)}$ for 
can be broken into two pieces:
\[\tau_{\nu,\mu} = \tau^1_{\nu,\mu} + \tau^2_{\nu,\mu}\]
where $\tau^1$ vanishes unless $\nu \sim \mu$ mod $n\Lambda$ and $\tau^2$ vanishes unless $\nu \sim s(\mu) + \alpha$ mod $n\Lambda$. Moreover:
\begin{equation}
  \label{tauone}
  \tau^1_{\mu, \mu} =
  (1 - q^{-1}) \frac{\mathbf{z}^{n \lceil \frac{\langle \alpha, \mu
        \rangle}{n} \rceil \alpha}}{1-q^{-1} \mathbf{z}^{n \alpha}}
\end{equation}
where $\lceil x \rceil$ denotes the smallest integer at least $x$, and
\begin{equation}
  \label{tautwo}
  \tau^2_{s(\mu)+\alpha, \mu} =
  g(\langle \alpha, \mu - \rho \rangle) \frac{1-\mathbf{z}^{n
      \alpha}}{1-q^{-1} \mathbf{z}^{n \alpha}}.
\end{equation}
\end{proposition}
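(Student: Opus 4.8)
The plan is to prove Proposition~\ref{KPlemma} by the classical rank-one computation of Kazhdan and Patterson, in the streamlined form of McNamara~\cite{McNamaraCS} (in the paper one may of course simply cite these sources). Since $w=s_\alpha$ is a simple reflection, the unipotent $U_w$ in \eqref{intertwinerunnorm} is the single one-parameter group $U_\alpha$, which I parametrize by $x_\alpha(t)$ with $t\in F$, and the relevant Bruhat cell lives in the rank-one metaplectic subgroup attached to $\iota_\alpha:SL_2\to G$, twisted by the torus. First I would record the two integral formulas in play, $\mathcal{A}_{s_\alpha}\phi(g)=\int_F\phi\bigl(w_\alpha^{-1}x_\alpha(t)g\bigr)\,dt$ and $W^{\chi^w}(\phi)=\int_{U^-}\phi(uw_0)\,\psi(u)\,du$, and then observe that applying $\mathcal{L}_\nu$ of \eqref{lambdafunctional} to $W^{\chi^w}\bigl(\mathcal{A}_{s_\alpha}(\pi_\chi(\varpi^\mu)\phi_K)\bigr)$ and dividing by $c_s(\chi)=\frac{1-q^{-1}\mathbf{z}^{-n\alpha}}{1-\mathbf{z}^{-n\alpha}}$ computes the structure constant $\tau_{\nu,\mu}$.

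Next I would evaluate the $U_\alpha$-integral, splitting it at $|t|=1$. On the part $v(t)\geqslant 0$ one has $w_\alpha^{-1}x_\alpha(t)\in K$, so this portion returns essentially $\pi_\chi(\varpi^\mu)\phi_K$ up to units; subdividing into shells $v(t)=0,1,2,\ldots$ and using that the metaplectic contributions from \eqref{toruscommutator} are periodic modulo $n$, the shell sums form a geometric series whose value, divided by $c_s(\chi)$, is the ``diagonal'' term $\tau^1$ supported on $\nu\sim\mu\bmod\Lambda$. The ceiling $\lceil\langle\alpha,\mu\rangle/n\rceil$ enters because $\chi$ is defined only on $H$, so rewriting $\varpi^\mu$ in terms of coset representatives of $\tilde{T}/H$ forces one to extract the $n\Lambda$-part of $\mu$, rounding $\langle\alpha,\mu\rangle$ up to the next multiple of $n$; this gives $\tau^1_{\mu,\mu}=(1-q^{-1})\frac{\mathbf{z}^{-n\lceil\langle\alpha,\mu\rangle/n\rceil\alpha}}{1-q^{-1}\mathbf{z}^{-n\alpha}}$. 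On the part $v(t)<0$ one applies the $SL_2$ Bruhat factorization, writing $x_\alpha(t)$ as a lower-triangular torus element times $w_\alpha$ times an upper-triangular unipotent with entry of order $-v(t)$, pushes the torus factor through $\chi$ and $\delta^{1/2}$ and the unipotent factor into the Whittaker integral where it meets $\psi$, and is left with a sum over shells $v(t)=-k$, $k\geqslant 1$, of $\psi$ twisted by the metaplectic term $(t,t)^{\langle\alpha,\,\cdot\,\rangle}$ of \eqref{torusfeltcommutator}. That shell integral is by definition an $n$-th order Gauss sum; tracking exponents shows the one occurring is $g(\langle\alpha,\mu-\rho\rangle)$, the $-\rho$ arising from $\delta^{1/2}$ and the behaviour of the section at $w_0$, and summing the geometric series in $k$ and dividing by $c_s(\chi)$ yields $\tau^2_{s(\mu)+\alpha,\mu}=g(\langle\alpha,\mu-\rho\rangle)\frac{1-\mathbf{z}^{-n\alpha}}{1-q^{-1}\mathbf{z}^{-n\alpha}}$, supported on $\nu\sim s(\mu)+\alpha\bmod\Lambda$.

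\textbf{The hard part} will be the bookkeeping of the metaplectic cocycle and Hilbert-symbol contributions through the Bruhat factorization: every time one commutes a $\varpi^\lambda$ past an $x_\alpha(t)$, the commutator formula \eqref{toruscommutator} together with the splittings of the section over $K$ and over unipotents emit Hilbert symbols that must assemble into exactly the argument $\langle\alpha,\mu-\rho\rangle$ of the Gauss sum and exactly the ceiling in the exponent of $\mathbf{z}$, so a stray sign or an off-by-one between $\mu$ and $\mu-\rho$ would spoil the answer. A secondary subtlety is convergence: the intertwining integral converges only for $\chi$ in a suitable cone, so one computes there and continues meromorphically, then checks that $\tau^1$ and $\tau^2$ are indeed rational in $\mathbf{z}$ with the supports claimed. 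None of this requires ideas beyond those already present in~\cite{KazhdanPatterson} and~\cite{McNamaraCS}, to which one may refer for the full details.
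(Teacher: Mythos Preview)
Your sketch is correct and follows exactly the classical rank-one intertwining-integral computation of Kazhdan--Patterson and McNamara that the paper relies on; indeed the paper does not supply its own proof of this proposition but simply cites \cite{KazhdanPatterson} and \cite{McNamaraCS} (Theorem~13.1) and records the dictionary $Q(\alpha)=1$, $n_\alpha=n$, $B(\alpha,\mu)=\langle\alpha,\mu\rangle$, $x_\alpha\mapsto\mathbf{z}^{-\alpha}$ needed to translate. Your outline of splitting the $U_\alpha$-integral at $|t|=1$, summing shells into geometric series for $\tau^1$, and extracting the Gauss sum from the $|t|>1$ Bruhat piece for $\tau^2$ is precisely that computation, so nothing more is needed here.
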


\begin{proof} This is Theorem~13.1 in~\cite{McNamaraCS}.
Recall that $Q(\alpha) = 1$ on simple roots $\alpha$ so
$n_\alpha = n / \gcd(n, Q(\alpha)) = n$. Our cocycle has been chosen so that
$B(\alpha, \mu) = \langle \alpha, \mu \rangle$. Our $n$-th order Gauss sum $g$ is
$q^{-1}\mathfrak{g}$ in the notation of \cite{McNamaraCS}. Finally, we have
$x_\alpha =\mathbf{z}^{\alpha}$ to obtain (\ref{tauone}) and~(\ref{tautwo}).
\end{proof}


We now return to the $R$-matrices. We will not be concerned with
partition functions in this section but we will use the notation
of Boltzmann weights (slightly modified) in order to make a connection with
Proposition~\ref{KPlemma}.

The weights we need now are given in Figures~\ref{modifiedweights} and~\ref{modifiedrwt};
we will derive these from those in Figures~\ref{gammaice} and~\ref{gamgamice}.
We remind the reader that in Section~\ref{partitionfunction}, we stressed
that it suffices to consider only the decoration $0$ associated to a $-$ spin,
and the table below reflects this assumption. So we omit other $-a$ decorated
spins with $a\neq0$.

\begin{proposition}
The Yang-Baxter equation is satisfied with the weights
in Figures~\ref{modifiedweights} and~\ref{modifiedrwt}.
\end{proposition}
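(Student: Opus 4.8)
The plan is to show that the modified weights of Figures~\ref{modifiedweights} and~\ref{modifiedrwt} are obtained from the weights of Figures~\ref{gammaice} and~\ref{gamgamice} by a composition of the weight-preserving operations catalogued in Section~\ref{supersection}, so that the Yang-Baxter equations of Theorems~\ref{gammaybe} and~\ref{firstbraid} transfer verbatim. The operations in question are: an overall rescaling of the $R$-vertex weights by a common function of $z_1,z_2$ (and, if needed, of the row-vertex weights by a common function of the row parameter); a change of basis $u_\alpha(z)=f(\alpha,z)\,v_\alpha(z)$ in each space $V(z)$, which multiplies the weight at a vertex by the $f$-ratio recorded in \eqref{frecipe}; and a Drinfeld twist multiplying the ``swap'' weight---case~II of Figure~\ref{gamgamice}, together with the corresponding configuration of Figure~\ref{gammaice}---by a function $\phi$ of the decorations satisfying $\phi(a,b)\,\phi(b,a)=1$.

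First I would write down explicitly the function $f(\alpha,z)$, the scalar normalizations, and the twist $\phi$ that realize this passage, simply by comparing the two pairs of tables entry by entry; I expect $f(+a,z)$ to be a monomial (as in the comparison with Kojima in Section~\ref{supersection}) with $f(-0,z)=1$, and $\phi$ chosen to reinstate or cancel Gauss-sum factors using $g(a-b)\,g(b-a)=v$. Then I would verify the factorization identity: for each admissible vertex configuration the quotient (new weight)$/$(old weight) should equal the product of the normalization scalar, the $f$-ratio of \eqref{frecipe}, and---for the one configuration type in which the two decorated spins to the right of the vertex are a nontrivial transposition of the two to the left---the twist factor $\phi$. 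The essential consistency point is that a \emph{single} choice of $f$ must serve simultaneously for the row vertices of Figure~\ref{modifiedweights} and the $R$-vertices of Figure~\ref{modifiedrwt}, since changing basis in the spaces $V(z_i)$ acts on both kinds of weight at once; this is precisely the situation anticipated in the discussion around \eqref{frecipe}.

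Granting the factorization, the conclusion is immediate: the change of basis preserves the braided Yang-Baxter equation of Theorem~\ref{firstbraid} by the computation in Section~\ref{supersection}, and, applying \eqref{frecipe} to both vertex types, it equally preserves the mixed Yang-Baxter equation of Theorem~\ref{gammaybe}; the twist preserves both equations by the twisting argument of Section~\ref{twisting}, since each side of each Yang-Baxter equation contains the same number of ``swap'' vertices up to pairs whose combined twist factor is $\phi(a,b)\,\phi(b,a)=1$; and a common rescaling of all $R$-vertices (respectively all row vertices) multiplies both sides of either equation by the same product. Composing the three operations therefore yields weights that again satisfy the Yang-Baxter equation. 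The main obstacle is the bookkeeping of the second paragraph: one must confirm that the modified weights differ from the original ones \emph{only} through these permitted moves---that no new decoration dependence beyond the swap type, and no position-dependent rescaling, has been introduced, and that the resulting $\phi$ genuinely satisfies the required symmetry. Once the explicit $f$, $\phi$ and normalizations are exhibited and the finitely many vertex types checked, everything else is a citation of Section~\ref{supersection}.
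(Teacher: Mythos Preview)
Your approach is essentially the paper's: transport the Yang-Baxter equation from Theorems~\ref{gammaybe} and~\ref{firstbraid} via the operations of Section~\ref{supersection}. The paper's proof uses exactly the change of basis with $f(+a,z)=z^a$ (for $0\leqslant a<n$) and $f(-0,z)=1$, then divides the row-vertex weights uniformly by $z_i$ and the $R$-vertex weights uniformly by $z_1^n-vz_2^n$; it then spot-checks one vertex type and leaves the rest to the reader.

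The one point worth flagging is that you anticipate needing a Drinfeld twist $\phi$ to adjust Gauss-sum factors, but in fact none is required here: the factor $g(a-b)$ in the case~II $R$-vertex of Figure~\ref{gamgamice} survives unchanged in Figure~\ref{modifiedrwt}, so your bookkeeping will yield $\phi\equiv1$. This is harmless---your framework simply specializes---but it means you need not worry about extending the twisting argument of Section~\ref{twisting} (which was stated only for the $R$--$R$--$R$ equation of Theorem~\ref{firstbraid}) to the mixed equation of Theorem~\ref{gammaybe}. Only the change-of-basis formula \eqref{frecipe}, applied simultaneously to both vertex types as you correctly emphasize, together with the global rescalings, is needed.
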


\begin{proof}
To obtain these from the weights in Figure~\ref{gammaice} and~\ref{gamgamice},
we make use of the \textit{change of basis} method described in
Section~\ref{supersection}. We take the function $f(\alpha,z)$ to equal
\[\left\{\begin{array}{ll}z^a&\text{if $\alpha=+a$, $0\leqslant a<n$,}\\
1&\text{if $\alpha=-0$.}\end{array}\right.\]
In Figure~\ref{modifiedweights} we further divide each weight by
$z_i$, and in Figure~\ref{modifiedrwt} we divide by $z_1^n-vz_2^n$.
These multiplications apply to all weights, so we may do this at our
convenience without affecting the Yang-Baxter equation.
\end{proof}

\begin{figure}[h]
\[
\begin{array}{|c|c|c|c|c|c|}
\hline
\tt{a}_1&\tt{a}_2&\tt{b}_1&\tt{b}_2&\tt{c}_1&\tt{c}_2\\
\hline
\begin{array}{c}\gammaice{+}{+}{+}{+}{a+1}{a}\\z_i^{-n\delta(a+1)}\end{array} &
\begin{array}{c}\gammaice{-}{-}{-}{-}{0}{0}\\\ 1 \end{array} &
\begin{array}{c}\gammaice{+}{-}{+}{-}{a+1}{a}\\g(a)z_i^{-n\delta(a+1)}\end{array} &
\begin{array}{c}\gammaice{-}{+}{-}{+}{0}{0}\\ 1 \end{array} &
\begin{array}{c}\gammaice{-}{+}{+}{-}{0}{0}\\ 1-v \end{array} &
\begin{array}{c}\gammaice{+}{-}{-}{+}{1}{0}\\z_i^{-n\delta(1)} \end{array} \\
\hline
\end{array}
\]
\caption{Modified Boltzmann weights.
  \label{modifiedweights}}
\end{figure}

\begin{figure}[h]
\[ {\tabulinesep=1mm \begin{tabu}{|c|c|c|c|c|c|}
\hline \tt{a}_1:
\begin{array}{c}\gamgam{+}{+}{+}{+}{a}{a}{a}{a}\\ \displaystyle\frac{-v + \mathbf{z}^{n \alpha}}{1-v \mathbf{z}^{n \alpha}} \end{array} & 
\begin{array}{c}\gamgam{+}{+}{+}{+}{b}{a}{b}{a}\\ g(a-b) \displaystyle\frac{1- \mathbf{z}^{n \alpha}}{1-v \mathbf{z}^{n \alpha}} \end{array} & 
\begin{array}{c}\gamgam{+}{+}{+}{+}{b}{a}{a}{b}\\
\frac{(1-v)}{1-v \mathbf{z}^{n \alpha}} \cdot \left\{\begin{array}{ll}
\scriptstyle \mathbf{z}^{n \alpha}\kern-5pt &\scriptstyle a>b, \\
\scriptstyle 1 &\scriptstyle a<b \end{array}\right.
\end{array} & 
\tt{a}_2: 
\begin{array}{c}\gamgam{-}{-}{-}{-}{0}{0}{0}{0}\\1\end{array} \\ 
\hline
\tt{b}_1:\begin{array}{c}\gamgam{+}{-}{+}{-}{a}{0}{a}{0}\\ \displaystyle\frac{v(1 - \mathbf{z}^{n \alpha})}{1-v \mathbf{z}^{n \alpha}} \end{array} & 
\tt{b}_2:\quad\begin{array}{c}\gamgam{-}{+}{-}{+}{0}{a}{0}{a}\\ \displaystyle\frac{1 - \mathbf{z}^{n \alpha}}{1-v \mathbf{z}^{n \alpha}} \end{array} & 
\tt{c}_1:\begin{array}{c}\gamgam{-}{+}{+}{-}{0}{a}{a}{0}\\ \displaystyle\frac{(1-v) \mathbf{z}^{n \alpha}}{1-v \mathbf{z}^{n \alpha}} \end{array} & 
\tt{c}_2:\begin{array}{c}\gamgam{+}{-}{-}{+}{a}{0}{0}{a}\\ \displaystyle\frac{(1-v)}{1-v \mathbf{z}^{n \alpha}} \end{array} \\ 
\hline
\end{tabu} } \] 
\caption{Modified weights $\hat{R}_{\mathbf{z}}$. When combined with the
  weights in Figure~\ref{modifiedweights}, they satisfy a Yang-Baxter
  equation. This follows from Theorem~\ref{gammaybe}. In this figure
  we are assuming that charges depicted as $a$ and $b$ are in distinct residue classes mod~$n$.
  \label{modifiedrwt}}
\end{figure}

\begin{proposition} \label{taumatch}
Let $\mu \in X_\ast(T) \simeq \mathbb{C}[\Lambda]$ with $\mu - \rho = (c_1,\cdots,c_r)$ for some integers $c_i \in [0,n)$. Let $\tau_{\nu, \mu}(\mathbf{z}) := \tau^{(s_i)}_{\nu,\mu}(\mathbf{z})$ as in
Proposition~\ref{KPlemma}.
Let $\operatorname{wt}$ be the weights for $\hat{R}$ in Figure~\ref{modifiedrwt} above with $v= q^{-1}$. Given any pair of integers $a,b$ with $a \equiv c_i$ and $b \equiv c_{i+1}$ mod $n$, if $a \not\equiv b$ mod $n$, then
$$ \tau^1_{\mu, \mu}(\mathbf{z}) = \operatorname{wt} \left( \begin{array}{c} \gamgam{+}{+}{+}{+}{a}{b}{b}{a} \end{array} \right) \quad \text{and} \quad \tau^2_{s_i(\mu)+\alpha_i, \mu}(\mathbf{z}) = \operatorname{wt} \left( \begin{array}{c}\gamgam{+}{+}{+}{+}{b}{a}{b}{a} \end{array} \right). $$
If $a \equiv b$ mod $n$, then both $\tau^1_{\mu, \mu}(\mathbf{z})$ and $\tau^2_{s_i(\mu)+\alpha_i, \mu}(\mathbf{z})$ are nonzero and
\begin{equation}
  \label{tauequalcase}
  \tau^1_{\mu, \mu}(\mathbf{z}) + \tau^2_{s_i(\mu)+\alpha_i, \mu}(\mathbf{z}) = \operatorname{wt} \left( \begin{array}{c}\gamgam{+}{+}{+}{+}{a}{a}{a}{a} \end{array} \right).
\end{equation}
\end{proposition}

\begin{proof}
We begin by rewriting $\tau^1$ and $\tau^2$ in terms of $c_i$ and $c_{i+1}$. Recall from (\ref{tauone}) that with $\alpha = \alpha_i$
\begin{align*}
\tau^1_{\mu, \mu} = ( (1 - q^{-1}) \frac{\mathbf{z}^{n \lceil \frac{\langle
      \alpha, \mu \rangle}{n} \rceil \alpha}}{1-q^{-1} \mathbf{z}^{n \alpha}}
=& (1 - q^{-1}) \frac{\mathbf{z}^{n \lceil \frac{c_i - c_{i+1}+1}{n} \rceil \alpha}}{1-q^{-1} \mathbf{z}^{n \alpha}} \\=& \frac{(1 - q^{-1})}{1-q^{-1} \mathbf{z}^{n \alpha}} \begin{cases}
  \mathbf{z}^{n \alpha} & \text{if $c_i - c_{i+1} \geqslant 0$} \\
  1 & \text{if $c_i - c_{i+1} < 0.$} \end{cases}
\end{align*}
where the second equality used that $\langle \alpha, \mu-\rho \rangle = c_i-c_{i+1}$. From (\ref{tautwo}),
$$ \tau^2_{s(\mu)+\alpha, \mu} = g(\langle \alpha, \mu - \rho \rangle) \frac{1-\mathbf{z}^{n \alpha}}{1-q^{-1} \mathbf{z}^{n \alpha}} = g(c_i-c_{i+1}) \frac{1-\mathbf{z}^{n \alpha}}{1-q^{-1} \mathbf{z}^{n \alpha}}. $$
Note that $\mu = s_i(\mu) + \alpha_i$ if and only if $c_i = c_{i+1}$, since $s_i(\mu) + \alpha_i = s_i(\mu - \rho) + \rho$, so $\tau^1$ and $\tau^2$ are only simultaneously nonzero when $c_i = c_{i+1}$.

Now we compare to the weights in Figure~\ref{modifiedrwt} with $a-b \equiv c_{i+1} - c_i$ (mod $n$) according to cases. First if $c_i \ne c_{i+1}$ so that $a \not\equiv b$ (mod $n$), then the modified $R$-vertex weight in the top row, third column entry of Figure~\ref{modifiedrwt} indeed matches the evaluation of $\tau^1$ above upon setting $v=q^{-1}$. Moreover, the top row, second column entry of Figure~\ref{modifiedrwt} agrees with $\tau^2$ under the same specialization $v = q^{-1}$.

To finish, consider the case when $c_i = c_{i+1}$ so that both $\tau^1$ and $\tau^2$ are nonzero. Then
$$ \tau^1_{\mu, \mu} + \tau^2_{s(\mu)+\alpha, \mu} = \frac{(1 - q^{-1}) \mathbf{z}^{n \alpha}}{1-q^{-1} \mathbf{z}^{n \alpha}} -q^{-1} \frac{1-\mathbf{z}^{n \alpha}}{1-q^{-1} \mathbf{z}^{n \alpha}} = \frac{\mathbf{z}^{n \alpha} - q^{-1}}{1-q^{-1} \mathbf{z}^{n \alpha}} $$
since $g(0) = -v =-q^{-1}$. And this in turn is precisely the weight of the $R$-vertex in the top row, first column of Figure~\ref{modifiedrwt} where $a \equiv c_i = c_{i+1}$ and $v=q^{-1}$.
\end{proof}

Recall that the module of Whittaker coinvariants $\pi_{\mathbf{z}, \psi}$
is naturally the dual space of the space $\mathcal{W}^{\mathbf{z}}$ of
Whittaker functionals on $\pi_{\mathbf{z}}$. If $\gamma\in\Gamma$, let
$\Omega_\mu$ be the image of $\mathbf{s}(\varpi^\mu)$ in $\pi_{\mathbf{z}, \psi}$.
Using (\ref{lambdafunctional}) this $\{\Omega_\mu\}$ is the basis of
$\pi_{\mathbf{z}, \psi}$ dual to the basis $W^{\mathbf{z}}_\mu$ of
$\mathcal{W}^{\mathbf{z}}$. Then the map $\overline{\mathcal{A}}_{s_i}$
induces the map
\begin{equation}
  \label{whiteqdual} \overline{\mathcal{A}}_{s_i} (\Omega_{\mu}) = \sum_\nu \tau_{\nu,\mu} \Omega_\nu .
\end{equation}
This follows from (\ref{whitequality}) by duality. We define the map
$\theta_{\mathbf{z}} : \pi_{\mathbf{z}, \psi} \longrightarrow
\bigoplus_i V_+(z_i)$ (needed for Theorem~\ref{commdiagwithintertwiner}) by
\[ \theta_{\mathbf{z}} (\Omega_{\mu}) = v_{c_1} \otimes \cdots \otimes
   v_{c_r}  \]
when $\mu \in \Gamma$. Recall that this means $\mu - \rho = (c_1, \cdots, c_r)$
with $0\leqslant c_i <n$. We will use the notation $v_{\mu - \rho}$ to denote this vector.

We are now ready to prove one of our main results.

\begin{proof}[Proof of Theorem~\ref{commdiagwithintertwiner}]
  Let $\mu \in \Gamma$. Let $\nu = s_i \mu + \alpha$. Write $\mu - \rho =
  (c_1, \cdots, c_r)$ and $\nu = s_i \mu + \alpha$, so that $\nu - \rho = s_i
  (\mu - \rho)$ has the same components with $c_i$ and $c_{i + 1}$
  interchanged.
  
  We consider the case where $\mu \neq \nu$. We have
  \[ \theta_{s_i \mathbf{z}} \overline{\mathcal{A}}_{s_i} (\Omega_{\mu}) =
     \theta (\tau_{\mu, \mu} \Omega_{\mu} + \tau_{\nu, \mu} \Omega_{\nu}) =
     \tau_{\mu, \mu} v_{\mu - \rho} + \tau_{\nu, \mu} v_{\nu - \rho} . \]
  On the other hand using (\ref{boltztor})
  \[ \tau R (v_{c_i} \otimes v_{c_j}) = \sum_{c_k, c_l} R^{c_k, c_l}_{c_i,
     c_j} (v_{c_l} \otimes v_{c_k}) . \]
  Taking $j=i+1$, on the right-hand side, the only nonzero terms are $(c_l, c_k) = (c_i, c_{i+1})$
  or $(c_{i+1}, c_i)$. So
  \[ (\tau R)_{i, i + 1} \theta_{\mathbf{z}} (\Omega_{\mu}) = (R_{c_i, c_{i
     + 1}}^{c_{i + 1}, c_i})_{i, i + 1} v_{\mu - \rho} + (R_{c_i, c_{i +
     1}}^{c_i, c_{i + 1}})_{i, i + 1} v_{\nu - \rho} . \]
  (The subscript $X_{i, i + 1}$ means that the operator is applied in the
  $i, i + 1$ position of the $r$-fold tensor product $V_{+\mathbf{z}}$.) Thus we
  need
  \[ (R_{c_i, c_{i + 1}}^{c_{i + 1}, c_i})_{i, i + 1} = \tau_{\mu, \mu},
     \qquad (R_{c_i, c_{i + 1}}^{c_i, c_{i + 1}})_{i, i + 1} = \tau_{\nu, \mu}
  \]
  and this is the content of Proposition~\ref{taumatch}.

  The case where $\mu=\nu$ is similar, using (\ref{tauequalcase}).
\end{proof}

\section{Functional Equations via Partition Functions}

In Theorem~\ref{matchwhit}, we gave one spherical Whittaker
function as the partition function of a solvable lattice model. However there
are $n^r$ independent Whittaker functions. In this section, we will show that
the charge statistic can be refined to give $n^r$ independent Whittaker
functions. In this section we will use the unmodified weights in
Figure~\ref{gammaice}.

In Theorem~\ref{matchwhit}, we were vague as to the precise
normalization. Now that we have defined enough notation in the previous
section, let us give the precise normalization. Let
$\chi = \chi_{\mathbf{z}}$. There is a unique functional $\mathcal{L}^{\circ}$
on $i(\chi)$ such that
\begin{equation}
  \mathcal{L}^{\circ} (\theta (\varpi^{\lambda}) v_0) =\mathbf{z}^{\lambda}\;.
\end{equation}
(This was denoted $\lambda$ in {\cite{McNamaraDuke}}, which underlies the
proof of Theorem~\ref{commdiagwithintertwiner}.) Evidently
\[ \mathcal{L}^{\circ} = \sum_{\gamma \in \Gamma} \mathbf{z}^{\gamma}
   \mathcal{L}_{\gamma}, \]
where $\Gamma$ is the set of representatives defined in
(\ref{prescribedreps}). We define, for $g \in \tilde{G}$
\[ W^{\circ} (g) =\mathcal{L}^{\circ} \mathbf{W} (\pi (g) \phi_K) , \]
where $\mathbf{W}$ is defined in (\ref{ichiwhittaker}).
The correct normalization for Theorem~\ref{matchwhit} is:
\begin{equation}
  \label{thmoneprec}
  Z (\mathfrak{S}_{\lambda}) =\mathbf{z}^{w_0 \rho}
  \delta^{- 1 / 2} (\varpi^{\lambda}) W^{\circ} (\varpi^{\lambda}) .
\end{equation}
Now if $\gamma \in \Gamma$ then
\[ W_{\gamma} (g) = \mathcal{L}_{\gamma} W (\pi (g)
   \phi_K), \]
so that $W^{\circ} = \sum \mathbf{z}^{\gamma} W_{\gamma}$.

Let $\mathfrak{M}=\mathbb{C} (z_1, \cdots, z_r, v)$ be the field of rational
functions in $z_i$ and $v$, which we may think of as indeterminates. Let
$\mathfrak{M}_n =\mathbb{C} (z_1^n, \cdots, z_r^n, v)$. Then $[\mathfrak{M}:
\mathfrak{M}_n] = n^r$ and a basis of $\mathfrak{M}$ over $\mathfrak{M}_n$
consists of the $n^r$ elements $\mathbf{z}^{\gamma}$ where $\gamma \in
\Gamma$, the set of representatives defined by (\ref{prescribedreps}).

\begin{lemma}
  \label{lpilemma}Let $g \in \tilde{G}$ and let $\gamma \in \Gamma$. Then
  $\mathcal{L}_{\gamma} (\pi (g) \phi_K)$ is in $\mathfrak{M}_n$.
\end{lemma}

\begin{proof}
  We make an Iwasawa decomposition $g = \varepsilon t v \varpi^{\nu} k$ where
  $\varepsilon \in \mu_n$, $t \in T (\mathfrak{o})$, $u \in U$, $\nu \in
  \Lambda$ and $k \in K$. Then $\mathcal{L}_{\gamma} (\pi (g) \phi_K) =
  \varepsilon \mathcal{L} (\pi (\varpi^{\nu}) \phi_K) = \varepsilon
  \mathcal{L}_{\gamma} (\theta (\varpi^{\lambda}) v_0)$. This depends only
  $\mathbf{z}^n$ by (\ref{lambdafunctional}).
\end{proof}

\begin{proposition}
  \label{whitinmn}Let $\lambda$ be a dominant weight, and let $\gamma \in
  \Gamma$. Then the function
  $\mathbf{z}^{\gamma} W_{\gamma}^{\mathbf{z}} (\varpi^{\lambda}\phi_K)$,
  considered as a function of $\mathbf{z}$, lies in the coset
  $\mathbf{z}^{\gamma} \mathfrak{M}_n$.
\end{proposition}

\begin{proof}
  The function $W_{\gamma}^{\mathbf{z}} (\varpi^{\lambda} \phi_K)$ is
  defined by the integral
  \[ \int_{U_-} \mathcal{L}_{\gamma}^{\mathbf{z}}
     \phi_K (u w_0 \varpi^{\lambda}) \overline{\psi (u)} \, d u. \]
  For every value of $u$ the integrand is in $\mathfrak{M}_n$ by
  Lemma~\ref{lpilemma}. Hence the integral is, also.
\end{proof}

If $x \in \mathbb{Z}$ let $[x]$ denote the least residue of $x$ modulo $n$.
The following result is a refinement of Theorem~\ref{matchwhit}.

\begin{theorem}
  \label{whitpartcharge}
  Let $(c_1, \cdots, c_r) \in \mathbb{Z}^r$ and let $\gamma \in \Gamma$ be
  defined by $\gamma_i - r + i = [N + 1 - r - c_i]$. Then
  \[ Z (\mathfrak{S}_{\lambda} ; \mathbf{c}) =\mathbf{z}^{w_0 \rho+\gamma}
     \delta^{- 1 / 2} (\varpi^{\lambda}) W_{\gamma} (\varpi^{\lambda}) . \]
\end{theorem}

\begin{proof}
  Since as a vector space $\mathfrak{M}= \bigoplus_{\gamma \in \Gamma}
  \mathbf{z}^{\gamma} \mathfrak{M}_n$, we may project both sides in
  (\ref{thmoneprec}) onto $\mathbf{z}^{\gamma + w_0 \rho} \mathfrak{M}_n$.
  It follows from Proposition~\ref{whitinmn} that the projection of the
  right-hand side is $\mathbf{z}^{\gamma + w_0 \rho} \delta^{- 1 / 2}
  (\varpi^{\lambda}) W_{\gamma} (\varpi^{\lambda})$. As for the left-hand side
  consider a state $\mathfrak{s}$ of the system $\mathfrak{S}_{\lambda}$. We
  observe that in the Boltzmann weights in Figure~\ref{gammaice} the vertex
  contributes a $z_i$ to the partition function if the spin to the left of the
  vertex is $-$, while it increments the charge if the spin to the left is
  $+$. Thus let $c_i$ be the charge at the left edge in the $i$-th row, and
  let $\nu_i$ be the power of $z_i$ that appears in the Boltzmann weight of
  the state. We see that $c_i + \nu_i = N$. Now if the Boltzmann weight of the
  state is in $\mathbf{z}^{\gamma + w_0 \rho} \mathfrak{M}_n$ we must have
  $\nu_i \equiv \gamma_i + i - 1$ modulo $N$. Now $\gamma_i - \rho_i =
  \gamma_i - r + i \equiv \nu_i - r + 1 \equiv N + 1 - r - c_i$. Remembering
  that $\gamma \in \Gamma$ means that $0 \leqslant \gamma_i - \rho_i < n$, we
  see that the state $\mathfrak{s}$ contributes to $Z (\mathfrak{S}_{\lambda}
  ; \mathbf{c})$ if and only if $\gamma_i - r + i = [N + 1 - r - c_i]$ and
  the statement follows.
\end{proof}

Our next result is a variant of Theorem~\ref{commdiagwithintertwiner}
that describes the functional equations of the partition function.
In view of \eqref{thmoneprec}, this can also be regarded as a functional
equation for the Whittaker functions. We will add $\mathbf{z}$ to the notation
and denote
$Z(\mathfrak{S}_\lambda;\mathbf{c})=Z(\mathfrak{S}_{\lambda,\mathbf{z}};\mathbf{c})$.
Let $s_i$ be the simple reflection that interchanges $i$ and $i+1$. In our
notation, note that $\mathbf{z}^{\alpha_i}=z_i/z_{i+1}$, where
$\alpha_i$ is the $i$-th simple root.

\begin{proposition}
  \label{equivmetaplecticice}
  Let $c$ be the least residue of $c_i-c_{i+1}$ modulo $n$. Then
  \begin{equation}
    \label{partfe}
  Z(\mathfrak{S}_{\lambda,s_i\mathbf{z}};s_i\mathbf{c})=
  (1-v)\frac{\mathbf{z}^{(n-c)\alpha_i}}{1-v\mathbf{z}^{n\alpha}}
  Z(\mathfrak{S}_{\lambda,\mathbf{z}};s_i\mathbf{c})
  +g(c_i-c_{i+1})\frac{1-\mathbf{z}^{n\alpha}}
  {1-v\mathbf{z}^{n\alpha}}
  Z(\mathfrak{S}_{\lambda,\mathbf{z}};\mathbf{c}).
  \end{equation}
\end{proposition}

\begin{proof}
  We attach the vertex $R_{z_i,z_{i+1}}$ to the right of the
  partition function of the system
  $(\mathfrak{S}_{\lambda,s_i\mathbf{z}};s_i\mathbf{c})$,
  thus:
  \[\scalebox{.85}{\begin{tikzpicture}
  \draw (0,0)--(8,0);
  \draw (0,2)--(8,2);
  \draw (1,-1)--(1,3);
  \draw (3,-1)--(3,3);
  \draw (5,-1)--(5,3);
  \draw (7,-1)--(7,3);
  \coordinate (a1) at (8,0);
  \coordinate (c1) at (10,2);
  \coordinate (a2) at (8,2);
  \coordinate (c2) at (10,0);
  \draw (a1) to [out=0,in=180] (c1);
  \draw (a2) to [out=0,in=180] (c2);
  \draw[fill=white] (0,2) circle (.25);
  \draw[fill=white] (0,0) circle (.25);
  \draw[fill=white] (10,2) circle (.25);
  \draw[fill=white] (10,0) circle (.25);
  \path[fill=white] (1,0) circle (.45);
  \path[fill=white] (1,2) circle (.3);
  \path[fill=white] (3,0) circle (.45);
  \path[fill=white] (3,2) circle (.3);
  \path[fill=white] (5,0) circle (.45);
  \path[fill=white] (5,2) circle (.3);
  \path[fill=white] (7,0) circle (.45);
  \path[fill=white] (7,2) circle (.3);
  \node at (1,2) {$z_i$};
  \node at (1,0) {$z_{i+1}$};
  \node at (3,2) {$z_i$};
  \node at (3,0) {$z_{i+1}$};
  \node at (5,2) {$z_i$};
  \node at (5,0) {$z_{i+1}$};
  \node at (7,2) {$z_i$};
  \node at (7,0) {$z_{i+1}$};
  \node at (0,0){$+$};
  \node at (0,2){$+$};
  \node at (10,0){$-$};
  \node at (10,2){$-$};
  \node at (10.5,0){$0$};
  \node at (10.5,2){$0$};
  \node at (-.7,0){$c_{i+1}$};
  \node at (-.7,2){$c_i$};
  \end{tikzpicture}}
  \]
  Consulting Figure~\ref{gamgamice}, there is only one possible configuration
  for the $R$-vertex, so attaching it just multiplies the partition function
  by $z_{i+1}^n-vz_i^n$. Now using the Yang-Baxter equation we may move the
  $R$-matrix to the left, and obtain the partition function of a system that
  looks like this:
  \[\scalebox{.85}{\begin{tikzpicture}
  \draw (0,0)--(8,0);
  \draw (0,2)--(8,2);
  \draw (1,-1)--(1,3);
  \draw (3,-1)--(3,3);
  \draw (5,-1)--(5,3);
  \draw (7,-1)--(7,3);
  \coordinate (a1) at (-2,0);
  \coordinate (c1) at (0,2);
  \coordinate (a2) at (-2,2);
  \coordinate (c2) at (0,0);
  \draw (a1) to [out=0,in=180] (c1);
  \draw (a2) to [out=0,in=180] (c2);
  \draw[fill=white] (-2,2) circle (.25);
  \draw[fill=white] (-2,0) circle (.25);
  \draw[fill=white] (8,2) circle (.25);
  \draw[fill=white] (8,0) circle (.25);
  \path[fill=white] (1,0) circle (.3);
  \path[fill=white] (1,2) circle (.45);
  \path[fill=white] (3,0) circle (.3);
  \path[fill=white] (3,2) circle (.45);
  \path[fill=white] (5,0) circle (.3);
  \path[fill=white] (5,2) circle (.45);
  \path[fill=white] (7,0) circle (.3);
  \path[fill=white] (7,2) circle (.45);
  \node at (1,0) {$z_i$};
  \node at (1,2) {$z_{i+1}$};
  \node at (3,0) {$z_i$};
  \node at (3,2) {$z_{i+1}$};
  \node at (5,0) {$z_i$};
  \node at (5,2) {$z_{i+1}$};
  \node at (7,0) {$z_i$};
  \node at (7,2) {$z_{i+1}$};
  \node at (-2,0){$+$};
  \node at (-2,2){$+$};
  \node at (8,0){$-$};
  \node at (8,2){$-$};
  \node at (8.5,0){$0$};
  \node at (8.5,2){$0$};
  \node at (-2.7,0){$c_{i+1}$};
  \node at (-2.7,2){$c_i$};
  \end{tikzpicture}}
  \]
In this case there are two possibilities for
the charges on the edges to the right of the
$R$-vertex (unless $c_i\equiv c_{i+1}$ modulo $n$)
and the two terms may be found again in
Figure~\ref{gamgamice}.

It must be checked that the statement remains true if $c_i=c_{i+1}$ though in
this case the two terms on the right-hand side in (\ref{partfe}) may be
combined. Since $g(0)=-v$ the coefficient is
\[\frac{(1-v)\mathbf{z}^{n\alpha_i}}{1-v\mathbf{z}^{n\alpha_i}} +
  \frac{-v(1-\mathbf{z}^{n\alpha_i})}{1-v\mathbf{z}^{n\alpha_i}} =
  \frac{-v+\mathbf{z}^{n\alpha_i}}{1-v\mathbf{z}^{n\alpha_i}}.
 \]
This is what we want by Figure~\ref{gamgamice}.
\end{proof}

Consider the following version of equation \eqref{partfe}:
  \begin{equation}
    \label{modpartfe}
  \widetilde{Z}(\mathfrak{S}_{\lambda,s_i\mathbf{z}};s_i\mathbf{c})=
  \widetilde{\operatorname{wt}} \left( \begin{array}{c} \gamgam{+}{+}{+}{+}{c_{i}}{c_{i+1}}{c_{i}}{c_{i+1}} \end{array} \right) 
  \widetilde{Z}(\mathfrak{S}_{\lambda,\mathbf{z}};s_i\mathbf{c})
  +\widetilde{\operatorname{wt}} \left( \begin{array}{c} \gamgam{+}{+}{+}{+}{c_{i}}{c_{i+1}}{\;\;c_{i+1}}{c_{i}} \end{array} \right)
  \widetilde{Z}(\mathfrak{S}_{\lambda,\mathbf{z}};\mathbf{c}),
  \end{equation}
where now $\widetilde{Z}$ is the partition function of the same ice model as before, but using the modified weights from Figure \ref{modifiedweights}, and $\widetilde{\operatorname{wt}}$ are also the modified weights from Figure \ref{modifiedrwt}. 

This may be compared with a result from Section~\ref{connex}.
Let $\varpi^\nu$ be a representative in $\tilde{T} / H$ with $\nu - \rho =
(c_1,\cdots,c_r)$ with $c_i \in [0,n)$.  Then for a simple reflection $s_i$
the following functional equation holds as explained in Proposition~\ref{KPlemma}:
\begin{equation} W_{\nu}^{{}^{s_i} \chi} \circ
  \overline{\mathcal{A}}_{s_i} (\pi(\varpi^\lambda) \phi_K) = \tau_{\nu,\nu}^1
  W_{\nu}^{\chi} (\pi(\varpi^\lambda) \phi_K) + \tau_{\nu, s_i \cdot
    \nu}^2 W_{{s_i \nu}}^\chi (\pi(\varpi^\lambda) \phi_K)
  \label{fundscat}.
\end{equation}

\begin{remark}
Combining the results of Proposition~\ref{taumatch} and Theorem~\ref{whitpartcharge}, we immediately
conclude that the right hand sides of (\ref{modpartfe}) and of
(\ref{fundscat}) are equal. We therefore obtain an interpretation of the
action of the intertwining operator on the Whittaker function at the ice model
level. To be more precise, the effect of the intertwining operator
$\overline{\mathcal{A}}_{s_1}$ on spherical Whittaker functions is realized by
swapping the roles of the parameters $z_i$ and $z_{i+1}$ in the ice model
while attaching an $R$-matrix at the edge of the system.
The effect of this operation is described by the Boltzmann weights
of the attached $R$-matrix.
\end{remark}

This shows that the functional equation of the Whittaker function has an
interpretation as equality of partition functions.
Such equivalences are useful when they transform a hard
problem in one area to an easy problem in a different area. Another example of
such an equivalence is given in~\cite{BBBG}. There, we
(together with Gray) prove the equality of the partition functions of two ice
models which is equivalent to the equality of two expressions for coefficients
of Weyl group multiple Dirichlet series. The proof of this fact occupies most
of \cite{wmd5book}; the proof is long and intricate. The proof in~\cite{BBBG}
on the other hand is very short and clear.

\bibliographystyle{hplain}
\bibliography{rmatrix}

\end{document}